\documentclass[a4paper, 11pt]{article}

\usepackage{amsmath}
\usepackage{amsfonts}
\usepackage{amssymb}
\usepackage[english]{babel}
\usepackage{graphicx}
\usepackage{amsthm}
\usepackage{mathrsfs}
\usepackage{pgf,tikz}
\usepackage{amstext} 
\usepackage{array}   
\newcolumntype{C}{>{$}c<{$}} 
\definecolor{uququq}{rgb}{0.25,0.25,0.25}
\newtheorem{theorem}{Theorem}[section]
\newtheorem{corollary}[theorem]{Corollary}
\newtheorem{lemma}[theorem]{Lemma}
\newtheorem{proposition}[theorem]{Proposition}

\usetikzlibrary{arrows}
\theoremstyle{definition}
\newtheorem{definition}[theorem]{Definition}
\theoremstyle{definition}
\newtheorem{remark}[theorem]{Remark}
\theoremstyle{definition}
\newtheorem{example}[theorem]{Example}

\newcommand{\Z}{\mathbb{Z}}

\newcommand{\F}{\mathbb{F}}

\newcommand{\comment}[1]{}
\numberwithin{equation}{section}

\begin{document}
\date{}
\title{Fano Kaleidoscopes and their generalizations}
\author{Marco Buratti \thanks{Dipartimento di Matematica e Informatica, Universit\`a di Perugia, via Vanvitelli 1 email: buratti@dmi.unipg.it}\\
\\
Francesca Merola
\thanks{Dipartimento di Matematica e Fisica, Universit\`a Roma Tre, Largo S.L. Murialdo, 1 
email: merola@mat.uniroma3.it }}
\maketitle

\begin{abstract}
In this work we introduce Fano Kaleidoscopes, Hesse Kaleidoscopes and their generalizations. These are a particular kind of colored designs for which we will discuss  general theory, present some constructions and prove existence results. In particular, using difference methods we show  the existence of both a Fano and a Hesse Kaleidoscope on $v$ points when $v$ is a prime or prime power congruent to 1$\pmod{6}$, $v\ne13$. In the Fano case this, together with known results on pairwise balanced designs, allows us to prove the existence of  Kaleidoscopes of order $v$ for many other values of $v$; we discuss what the situation is, on the other hand, in the Hesse and general case.   

\end{abstract}

\section{Introduction}
In this work we introduce Fano Kaleidoscopes and their generalizations, discuss their properties and prove some existence results. A Fano Kaleidoscope of order $v$, briefly a FK$(v)$, is a particular kind of colored design described informally in the following way.

Consider a set ${\cal F}$ of Fano planes whose points belong  to a
 given $v$-set, say $\cal V$, with the seven lines of each plane colored with seven different colors,
 say $c_0,c_1,
 \dots, c_6$.  We say that $\cal F$ is a {\it Fano-Kaleidoscope} of order $v$ (briefly FK$(v)$) if for any
two distinct points $x$, $y$ of $\cal V$ and any color $c_i\in\{c_0,c_1,\dots,c_6\}$ there is exactly one Fano plane of ${\cal F}$ 
whose $c_i$-colored line contains $x$ and $y$.  

A Fano Kaleidoscope can be thought of in the context of colored designs or colored graph decompositions  (see for instance \cite{ABJ,CRYI,CRYII,CRYIII,LW}).
The notion of colored design turns out to be quite powerful, and encompasses many design-theoretic structures, for instance perfect cycle systems, whist tournaments, nested cycle systems; many other examples are contained in \cite{LW}. 

The most important result in this area is Lamken and Wilson's powerful asymptotic existence result \cite{LW},  that  show asymptotic existence of edge-colored graph decompositions of $K_v$ under some reasonable necessary conditions. No bound on the asymptotics in $v$ is given in \cite{LW}.

The notion of Fano Kaleidoscope may be generalized for example by considering, in place of the set $\cal F$ of Fano planes, a set $\cal H$ of 2-$(9,3,1)$-designs, i.e. affine planes $AG(2,3)$ (or Hesse planes), whose points belong  to a
 given $v$-set $\cal V$. In this case the 12 lines of each plane will be colored with 12assigned  different colors, and we require that for any choice of two points of $\cal V$ and of a color $c$ there is only one plane in which the line through the two chosen points has color $c$; in this situation we will speak of a Hesse Kaleidoscope of order $v$. More generally still, one might consider a set $\cal D$ of 2-$(n,h,1)$-designs with vertices all belonging to a given $v$-set $\cal V$, color the $b$ blocks of each design with $b$ different colors and ask once more that for any choice of two points of $\cal V$ and a color $c$ there is  only one design in which the block through the two chosen points has color $c$.

In this work, after the first definitions, properties and methods in Section \ref{basics}, we present in Section \ref{recur} some composition  constructions; 
Section \ref{ppower} contains our main theorem, an existence result stating that there is a {\it regular} Fano Kaleidoscope of order any admissible prime or prime power $v$, except $v=13$. 
This result  uses a construction from \cite{BP} ensuring existence for $v$ greater that an explicit bound; existence for smaller values of $v$ has been checked by computer.

In Section \ref{PBD} we show that the result on Kaleidoscopes of prime power order can be used to prove existence for a much larger class of orders, by using a construction involving pairwise balanced designs, and known results on their existence. Indeed, if we could construct a 
Kaleidoscope on 13 points, we would completely solve the existence problem for FK$(v)$s for all admissible values of $v$, with only 22 possible exceptions. 

As noted above, the notion of Kaleidoscope can easily be modified to be applied to designs other than the Fano plane; 
in Section \ref{hesse} we consider the notion of Hesse Kaleidoscope, where the role of the Fano plane is taken by the affine plane $AG(2,3)$. 
We shall discuss how the methods used previously can be used to obtain results also in this case, proving for instance the analogous result to Theorem \ref{FK(q)} for Hesse Kaleidoscopes, and  highlight the new difficulties that arise. 
 
Finally, we discuss further generalizations and outline some possible research directions in the last section.

\section{Fano Kaleidoscopes: notation and difference methods}\label{basics}

Let us start with the basic definitions, fix the notation we shall use in the paper, and present the techniques we shall use. 

As is well known, a 2-$(v,k,\lambda)$ {\it design}, or simply $(v,k,\lambda)$-{\it design}, is a pair $({\cal V},{\cal B})$, where $\cal V$ is a $v$-set of points and 
$\cal B$ is a set of $k$-subsets of $\cal V$, having the property that each pair of points of $\cal V$ is contained in precisely $\lambda$ blocks of  $\cal B$. 
The unique 2-$(7,3,1)$-design is called a {\it Fano plane}.
Let us repeat the definition of Fano Kaleidoscope and introduce some related notation.

\begin{definition}\label{defFK} Let ${\cal F}$ be a set of Fano planes such that
 the seven lines of each plane are colored with seven different colors
 $c_0,c_1,
 \dots, c_6$, and such that the points of the planes of ${\cal F}$ belong to a
 given $v$-set $\cal V$.  We say that $\cal F$ is a {\it Fano-Kaleidoscope} of order $v$ (briefly FK$(v)$) if for any
two distinct points $x$, $y$ of $\cal V$ and any color $c_i\in\{c_0,c_1,\dots,c_6\}$ there is exactly one Fano plane of ${\cal F}$ 
whose $c_i$-colored line contains $x$ and $y$.  
\end{definition}

Throughout this article, given an ordered 7-set $B=(b_0,b_1,\dots,b_6)$,  the triple $\ell_i(B)=\{b_i,b_{i+1},b_{i+3}\}$ (with the indices taken modulo 7) 
will be called the $i$-th line of $B$ for $0\leq i\leq 6$. Also, we set ${\cal L}(B)=\{\ell_i(B) \ | \ 0\leq i\leq 6\}$.
Of course the pair $(B,{\cal L}(B))$ is a Fano plane. 

\begin{figure}[h]
\begin{tikzpicture}[line cap=round,line join=round,
x=1.2cm,y=1.2cm]
\clip(-2.0,1.30) rectangle (10,4.7);
\draw [color=uququq, line width=1] (2,2)-- (5,2);
\draw [color=uququq, line width=1] (5,2)-- (3.5,4.6);
\draw [color=uququq, line width=1] (3.5,4.6)-- (2,2);
\draw [color=uququq, line width=1] (3.5,4.6)-- (3.5,2);
\draw [color=uququq, line width=1] (2,2)-- (4.25,3.3);
\draw [color=uququq, line width=1] (2.75,3.3)-- (5,2);
\draw [color=uququq, line width=1] (3.5,2.87) circle (1.04cm);
\fill [color=uququq] (3.5,4.6) circle (1.5pt);
\draw[color=uququq] (1.8,1.7) node {$b_0$};
\draw[color=uququq] (3.6,1.7) node {$b_1$};
\draw[color=uququq] (5.2,1.7) node {$b_3$};
\draw[color=uququq] (4.45,3.5) node {$b_2$};
\draw[color=uququq] (2.6,3.5) node {$b_4$};
\draw[color=uququq] (3.85,4.5) node {$b_5$};
\draw[color=uququq] (3.8,2.85) node {$b_6$};

\fill [color=uququq] (2,2) circle (1.5pt);
\fill [color=uququq] (5,2) circle (1.5pt);
\fill [color=uququq] (3.5,2) circle (1.5pt);
\fill [color=uququq] (2.75,3.3) circle (1.5pt);
\fill [color=uququq] (4.25,3.3) circle (1.5pt);
\fill [color=uququq] (3.5,2.87) circle (1.5pt);
\end{tikzpicture}
\end{figure}

\begin{remark}\label{repeat}
First note that the underlying, uncolored structure of a FK$(v)$ is a 2-$(v,7,7)$ design. Thus, an obvious necessary condition for the existence of a FK$(v)$ is $v\equiv 1 \pmod{6}$.

Note also that a $FK(v)$ trivially exists for all $v$ for which a 2-$(v,7,1)$ design ${\cal D}$ exists: we fix an ordering of each  block $B$ of ${\cal D}$ and replicate each block  seven times, obtaining from each $B$ seven identical ordered blocks $B_0, B_1,\dots,B_6$. 
Then color the $i$-th line of $B$ with color $c_{i+j \ ({\rm mod} \ 7)}$ in $B_j$ for $0\leq i\leq 6$ and $0\leq j\leq 6$. 
The obtained structure clearly is the desired FK$(v)$.
\end{remark}

For instance, starting from the trivial 2-$(7,7,1)$ design whose only block is $B=\{0,1,2,3,4,5,6\}$ one gets the FK$(7)$ whose Fano planes $B_0$, $B_1$, \dots, $B_6$ all coincide with 
the pair $(B,{\cal L})$ where $${\cal L}=\bigl{\{}\{0,1,3\},\{1,2,4\},\{2,3,5\},\{3,4,6\},\{4,5,0\},\{5,6,1\},\{6,0,2\}\bigl{\}}$$ 
but whose lines are colored as displayed below 
\begin{center}
\begin{tabular}{|c|c|c|c|c|c|c|c|}
\hline
$$  & $B_0$ & $B_1$ & $B_2$ & $B_3$ & $B_4$ & $B_5$ & $B_6$\\
\hline
$\{0,1,3\}$ & $c_0$  & $c_1$  & $c_2$  & $c_3$  & $c_4$  & $c_5$  & $c_6$ \\
\hline
$\{1,2,4\}$ & $c_1$  & $c_2$  & $c_3$  & $c_4$  & $c_5$  & $c_6$  & $c_0$ \\
\hline
$\{2,3,5\}$ & $c_2$  & $c_3$  & $c_4$  & $c_5$  & $c_6$  & $c_0$  & $c_1$ \\
\hline
$\{3,4,6\}$ & $c_3$  & $c_4$  & $c_5$  & $c_6$  & $c_0$  & $c_1$  & $c_2$ \\
\hline
$\{4,5,0\}$ & $c_4$  & $c_5$  & $c_6$  & $c_0$  & $c_1$  & $c_2$  & $c_3$ \\
\hline
$\{5,6,1\}$ & $c_5$  & $c_6$  & $c_0$  & $c_1$  & $c_2$  & $c_3$  & $c_4$ \\
\hline
$\{6,0,2\}$ & $c_6$  & $c_0$  & $c_1$  & $c_2$  & $c_3$  & $c_4$  & $c_5$ \\
 \hline
\end{tabular}
\end{center}

Now we focus our attention on constructing {\it regular} Fano Kaleidoscopes via difference methods.

\begin{definition} Let $G$ be an additively written group of order $v$.  A Fano Kaleidoscope FK$(v)$ is called $G$-regular if $G$ acts sharply transitively on the set $\cal V$ of points (so that we can identify the elements of $\cal V$ with the elements of $G$), and if $G$ leaves it 
invariant.
\end{definition}

To build regular Fano Kaleidoscopes we may introduce a variation of the notion of {\em difference family}  which is used in constructing ``ordinary'' regular designs (see for instance \cite{AB,BJL}).

If $B$ is a subset of an additive group $G$, the list of
differences of $B$ is the multiset $\Delta B$ of all possible differences $x- y$ with $(x, y)$ an ordered pair of distinct elements of $B$. 
The development of $B$ under $G$ is the collection dev$B = \{B+g \ | \ g \in G\}$. 
Recall that a collection ${\cal F}$ of $k$-subsets of an additive group $G$ of order $v$ is a $(v, k, \lambda)$-difference family (DF for short), 
if the list of differences of ${\cal F}$, namely the multiset union
$\Delta {\cal F} := \displaystyle\biguplus_{B\in{\cal F}} \Delta B,$ covers every non-zero element of $G$ exactly 
$\lambda$ times.
 When it is necessary to specify the group $G$, we speak of a $(G, k, \lambda)$-DF. Such a difference family generates a regular 2-$(v, k, \lambda)$ design whose points
are the elements of $G$ and whose block-multiset is the development of ${\cal F}$,
namely the multiset sum dev${\cal F}:= \displaystyle\biguplus_{B\in{\cal F}} {\rm dev} B$.
We now adapt this definition to the kaleidoscopic situation.

\begin{definition}\label{defFKDF}
A \emph{Fano Kaleidoscopic Difference Family} of order $v=6t+1$, briefly FKDF$(v)$, is a $(v,7,7)$-difference family 
${\cal F}=\{B_1,\dots,B_t\}$ in a group $G$, together with an ordering of the seven points 
in each block $B_i$ of the family, such that for each fixed $j\in\{0,1,\dots,6\}$ the set ${\cal F}_j$ of the $j$-th lines 
of all blocks of $\cal F$
$${\cal F}_j=\{\ell_j(B_i) \ | \ 1\le i\le t\}$$ 
is a $(v,3,1)$-DF. When it is necessary to specify the group $G$, we will also speak of a FKDF$(G)$.
\end{definition}

It is easy to see from the definition of Fano Kaleidoscope that the following holds.

\begin{proposition}\label{FKDF}
The existence of  a FKDF$(v)$ in a group $G$ implies the existence of a  $G$-regular  FK$(v)$.
\end{proposition}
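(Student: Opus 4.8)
The plan is to develop the family under $G$ and to carry the line-colouring along the translation action. Writing each ordered block as $B_i=(b^i_0,b^i_1,\dots,b^i_6)$ and letting $B_i+g$ inherit the ordering of $B_i$, I would take as the planes of the kaleidoscope the collection $\{(B_i+g,{\cal L}(B_i+g))\mid 1\le i\le t,\ g\in G\}$; each of these is a Fano plane because $(B_i,{\cal L}(B_i))$ is one and translation by $g$ is an isomorphism. The colouring is then dictated by the indexing: since $\ell_j(B_i+g)=\ell_j(B_i)+g$, the $j$-th line of every translate retains the index $j$, so I colour it with $c_j$. This is precisely the colouring that makes the group action colour-preserving.

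First I would record that, for a fixed colour $c_j$, the set of all $c_j$-coloured lines of the whole collection is exactly ${\rm dev}{\cal F}_j=\{\ell_j(B_i)+g\mid 1\le i\le t,\ g\in G\}$. By hypothesis ${\cal F}_j$ is a $(v,3,1)$-DF, so ${\rm dev}{\cal F}_j$ is a $2$-$(v,3,1)$ design. In particular every pair of distinct points $x,y\in G$ lies in one and only one $c_j$-coloured line, which immediately gives the existence half of the kaleidoscope condition.

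The hard part — really the only step with content beyond unwinding definitions — is to upgrade ``exactly one $c_j$-coloured line through $x,y$'' to ``exactly one \emph{plane} whose $c_j$-coloured line contains $x,y$'', since a priori several planes could share a single line. Here I would use that a $2$-$(v,3,1)$ design has no repeated blocks (a repeated triple would put some pair in two blocks, against $\lambda=1$). As $v=6t+1$ gives $v(v-1)/6=tv$ blocks, the $tv$ triples $\ell_j(B_i)+g$, as $(i,g)$ ranges over its $tv$ values, are pairwise distinct; hence the index $(i,g)$ is recovered from the triple $\ell_j(B_i)+g$, so the map sending a plane to its $c_j$-line is injective. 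Consequently the unique $c_j$-coloured line through $x,y$ belongs to a unique plane, completing the verification of Definition \ref{defFK}.

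Finally, $G$-regularity is immediate from the construction: $G$ acts on ${\cal V}=G$ by translation, sharply transitively, and the identity $(B_i+g)+g'=B_i+(g+g')$ shows that the plane collection is $G$-invariant, while the colouring is preserved because translation fixes the line-index $j$. This produces the desired $G$-regular FK$(v)$.
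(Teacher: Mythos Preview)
Your proof is correct and follows the same approach as the paper: develop the family under $G$ and colour the $j$-th line of every translate with $c_j$. The paper's own proof is extremely terse---it simply states the construction and asserts the result---whereas you explicitly verify the point the paper leaves implicit, namely that the uniqueness of the $c_j$-coloured line through a pair upgrades to uniqueness of the \emph{plane} containing that line, via the simplicity of the $2$-$(v,3,1)$ design ${\rm dev}\,{\cal F}_j$.
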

\begin{proof}
Let ${\cal F}=\{B_1,\dots,B_t\}$ be a FKDF$(v)$ in $G$. We already know that $dev{\cal F}$ is the collection of blocks of a 
2-$(v,7,7)$ design. We obtain the desired $G$-regular  FK$(v)$ by coloring the $j$-th triple of any translate of $B_i$ with
color $c_j$ for $1\leq i\leq t$ and $0\leq j\leq 6$.
\end{proof}

\begin{example}\label{19}
Consider the three 7-ples of elements of $\Z_{19}$  
$$B_1=(0,1,2,4,5,11,8), \ B_2=(0, 7, 14, 9, 16, 1, 18), \ B_3=(0, 11, 3, 6, 17, 7, 12)$$
and set ${\cal F}=\{B_1,B_2,B_3\}$.

Keeping the same notation as in Definition \ref{defFKDF} we have:
\small
$${\cal F}_0=\bigl{\{}\{0,1,4\},\{0,7,9\},\{0,11,6\}\bigl{\}};\quad {\cal F}_1=\bigl{\{}\{1,2,5\},\{7,14,16\},\{11,3,17\}\bigl{\}};$$
$${\cal F}_2=\bigl{\{}\{2,4,11\},\{14,9,1\},\{3,6,7\}\bigl{\}};\bigl{\}};\quad {\cal F}_3=\bigl{\{}\{4,5,8\},\{9,16,18\},\{6,17,12\}\bigl{\}};$$
$${\cal F}_4=\bigl{\{}\{5,11,0\},\{16,1,0\},\{17,7,0\}\bigl{\}};\quad {\cal F}_5=\bigl{\{}\{11,8,1\},\{1,18,7\},\{7,12,11\}\bigl{\}};$$
$${\cal F}_6=\bigl{\{}\{8,0,2\},\{18,0,14\},\{12,0,3\}\bigl{\}}.$$

\normalsize
Looking at the {\it difference tables} of the three blocks of ${\cal F}_0$

\begin{center}
\begin{tabular}{|l|c|r|c|r|c|r|c|r|c|r|c|r|}
\hline {$$} & 0 & 1 & 4    \\
\hline $0$ & $\bullet$ & \bf18 & \bf15  \\
\hline $1$ & $\bf1$ & $\bullet$ & \bf16  \\
\hline $4$ & $\bf4$ & \bf3 & $\bullet$  \\
\hline
\end{tabular}\quad\quad\quad
\begin{tabular}{|l|c|r|c|r|c|r|c|r|c|r|c|r|}
\hline {$$} & 0 & 7 & 9    \\
\hline $0$ & $\bullet$ & \bf12 & \bf10  \\
\hline $7$ & $\bf7$ & $\bullet$ & \bf17  \\
\hline $9$ & $\bf9$ & \bf2 & $\bullet$  \\
\hline
\end{tabular}\quad\quad\quad
\begin{tabular}{|l|c|r|c|r|c|r|c|r|c|r|c|r|}
\hline {$$} & 0 & 11 & 6    \\
\hline $0$ & $\bullet$ & \bf8 & \bf13  \\
\hline $11$ & $\bf11$ & $\bullet$ & \bf5  \\
\hline $6$ & $\bf6$ & \bf14 & $\bullet$  \\
\hline
\end{tabular} 
\end{center}
\normalsize
we see that $\Delta{\cal F}_0$ covers $\Z_{19}\setminus\{0\}$ exactly once so that ${\cal F}_0$ is a $(19,3,1)$-DF.
In the same way, one can see that the same happens for all the other ${\cal F}_j$s. Then we conclude that ${\cal F}$
 is a FKDF$(19)$.
\end{example}

\section{Composition constructions}\label{recur}
In this section, we present two composition constructions. The former is based on the notion of a {\it difference matrix} and the latter on the notion of a 
{\it pairwise balanced design}.

Recall that for $H$ an additive group, a $(H, k, 1)$ {difference matrix} (briefly DM) is a $k\times |H|$ matrix with entries from $H$ such that the difference of any two distinct rows contains each element of $H$ exactly once. 
We will make use of  the following well-known construction due to Jungnickel \cite{J} (see also \cite{Bdm}).

\begin{theorem}\label{dieter}
If we have a $(G,k,\lambda)$ difference family, a $(H,k,\lambda)$ difference family, and a $(H,k,1)$ difference matrix, then there exists a 
$(G\times H,k,\lambda)$ difference family.
\end{theorem}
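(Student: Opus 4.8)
The plan is to write down an explicit family in $G\times H$ assembled from the two difference families and the difference matrix, and then verify the $\lambda$-fold covering by partitioning $(G\times H)\setminus\{(0,0)\}$ according to whether the first coordinate is zero. Write $\mathcal{A}=\{A_1,\dots,A_s\}$ for the $(G,k,\lambda)$-DF, $\mathcal{B}=\{B_1,\dots,B_t\}$ for the $(H,k,\lambda)$-DF, and $M=(m_{pj})$ for the $(H,k,1)$-DM, whose $k$ rows are indexed by $p$ and whose $|H|$ columns are indexed by $j$. I would fix an ordering $A_i=(a_{i,1},\dots,a_{i,k})$ of each block of $\mathcal{A}$, so that its entries can be paired with the rows of $M$.

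First I would build two types of blocks. For each $i$ and each column $j$ I set
$$C_{i,j}=\{(a_{i,1},m_{1j}),(a_{i,2},m_{2j}),\dots,(a_{i,k},m_{kj})\},$$
and for each block $B_\ell$ of $\mathcal{B}$ I set $D_\ell=\{0\}\times B_\ell$. Both have cardinality $k$: the first coordinates $a_{i,1},\dots,a_{i,k}$ of $C_{i,j}$ are already pairwise distinct, and $B_\ell$ has $k$ elements. I claim the collection $\{C_{i,j}\}\cup\{D_\ell\}$ is the desired $(G\times H,k,\lambda)$-DF.

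The heart of the argument is the contribution of the $C_{i,j}$. For distinct positions $p\neq q$ one has $(a_{i,p},m_{pj})-(a_{i,q},m_{qj})=(a_{i,p}-a_{i,q},\,m_{pj}-m_{qj})$, and here the two ingredients play complementary roles. Keeping $i,p,q$ fixed and letting $j$ range over all columns, the defining property of the difference matrix makes $m_{pj}-m_{qj}$ run through every element of $H$ exactly once, while the first coordinate stays equal to $a_{i,p}-a_{i,q}$; thus this pair contributes exactly $\{a_{i,p}-a_{i,q}\}\times H$ to $\biguplus_j\Delta C_{i,j}$. Summing over all $i$ and all ordered pairs $(p,q)$, the first coordinates reproduce $\biguplus_i\Delta A_i$, which covers each nonzero element of $G$ exactly $\lambda$ times and never covers $0$. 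Hence the blocks $C_{i,j}$ together cover every $(x,y)$ with $x\neq 0$ exactly $\lambda$ times, and produce no difference with vanishing first coordinate.

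It remains to handle the differences $(0,y)$ with $y\neq 0$, which come solely from the $D_\ell$: from $(0,b_p)-(0,b_q)=(0,b_p-b_q)$ one gets $\biguplus_\ell\Delta D_\ell=\{0\}\times\bigl(\biguplus_\ell\Delta B_\ell\bigr)$, and this covers each such $(0,y)$ exactly $\lambda$ times because $\mathcal{B}$ is a $(H,k,\lambda)$-DF. Together the two computations show that every nonzero element of $G\times H$ is covered exactly $\lambda$ times. The only delicate point---more a matter of careful bookkeeping than of genuine difficulty---is the order of summation in the third step: one must invoke the difference-matrix property with $i,p,q$ held fixed and $j$ varying, and only afterwards sum over $i$ and the ordered pairs to recover $\Delta\mathcal{A}$; respecting this order makes the whole verification routine.
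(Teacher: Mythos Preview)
Your construction is exactly the one the paper gives: the blocks $C_{i,j}$ are the paper's $B_{i,j}$ and the $D_\ell$ are its $\{0\}\times B'$, so the family you build is precisely ${\cal F}\circ_M{\cal F}'$. The paper simply states the construction and asserts it works, whereas you supply the verification by splitting $(G\times H)\setminus\{(0,0)\}$ according to the first coordinate; this is the standard check and is correct as written.
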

\begin{proof}
Let ${\cal F}=\{B_i \ | \ i \in I\}$ be a $(G,k,\lambda)$-DF with $B_i=\{b_{i,1},\dots,b_{i,k}\}$ for each $i\in I$, let
${\cal F}'$ be a $(H,k,\lambda)$-DF, and let $M=[m_{r,c}]$ be a $(H,k,1)$-DM. 
Set $B_{i,j}=\{(b_{i,1},m_{1,j}),\dots,(b_{i,k},m_{k,j})\}$ for each pair $(i,j)\in I\times\{1,\dots,|H|\}$. Then
$${\cal F}\circ_M{\cal F}':=\{B_{i,j} \ | \ i\in I; 1\leq j\leq |H|\} \ \cup \ \{\{0\}\times B' \ | \ B' \in {\cal F}'\}$$
is the desired $(G\times H,k,\lambda)$-DF.
\end{proof}

This construction allows us to prove the following.

\begin{theorem}\label{composingFKDF}
If we have a  FKDF$(G)$, a FKDF$(H)$, and a $(H,7,1)$-DM, then there exists a FKDF$(G\times H)$.
\end{theorem}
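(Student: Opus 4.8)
The plan is to realize the combined family as the output of Jungnickel's construction (Theorem \ref{dieter}), applied simultaneously to the underlying $7$-element blocks and, color by color, to the $3$-element lines.

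First I would fix notation: write FKDF$(G)=\{B_1,\dots,B_s\}$ with each block ordered as $B_i=(b_{i,0},\dots,b_{i,6})$, write FKDF$(H)=\{C_1,\dots,C_r\}$ with $C_k=(c_{k,0},\dots,c_{k,6})$, and let $M=[m_{p,q}]$ (with $0\le p\le 6$ and $1\le q\le|H|$) be the given $(H,7,1)$-DM. Since the underlying $(G,7,7)$-DF and $(H,7,7)$-DF, together with $M$, satisfy the hypotheses of Theorem \ref{dieter}, the family $\mathcal{F}\circ_M\mathcal{F}'$ is already a $(G\times H,7,7)$-DF. I would then equip its blocks with the ordering inherited from the ingredients: order $B_{i,q}:=((b_{i,0},m_{0,q}),\dots,(b_{i,6},m_{6,q}))$, pairing the $p$-th point of $B_i$ with the $p$-th row of column $q$, and order $\{0\}\times C_k:=((0,c_{k,0}),\dots,(0,c_{k,6}))$.

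The heart of the argument is to check that, with this ordering, each color class is a $(G\times H,3,1)$-DF. Fix $\alpha\in\{0,\dots,6\}$. The $\alpha$-th line of $B_{i,q}$ is $\{(b_{i,\alpha},m_{\alpha,q}),(b_{i,\alpha+1},m_{\alpha+1,q}),(b_{i,\alpha+3},m_{\alpha+3,q})\}$, i.e.\ the line $\ell_\alpha(B_i)$ with second coordinates drawn from rows $\alpha,\alpha+1,\alpha+3$ of $M$; and the $\alpha$-th line of $\{0\}\times C_k$ is $\{0\}\times\ell_\alpha(C_k)$. The key observation is that the $3\times|H|$ submatrix $M_\alpha$ formed by rows $\alpha,\alpha+1,\alpha+3$ of $M$ is itself an $(H,3,1)$-DM, since the defining property of a difference matrix concerns pairs of rows and is therefore preserved by any selection of rows. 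Hence applying Theorem \ref{dieter} to the $(G,3,1)$-DF $\mathcal{F}_\alpha=\{\ell_\alpha(B_i)\}$, the $(H,3,1)$-DF $\mathcal{F}'_\alpha=\{\ell_\alpha(C_k)\}$, and the DM $M_\alpha$ produces precisely the collection of $\alpha$-th lines described above, so that collection is a $(G\times H,3,1)$-DF. As $\alpha$ is arbitrary, all seven color classes are $(G\times H,3,1)$-DFs, and $\mathcal{F}\circ_M\mathcal{F}'$ is the desired FKDF$(G\times H)$.

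The one point requiring care, and the main (mild) obstacle, is the row-to-point bookkeeping: one must verify that the three rows $\{\alpha,\alpha+1,\alpha+3\}$ selected by the line $\ell_\alpha$ are exactly the three rows carrying those second coordinates in $B_{i,q}$, so that Jungnickel's construction applied to the lines reproduces the lines of Jungnickel's construction applied to the blocks. This is immediate from the inherited ordering, but it is the step where careless indexing would break the correspondence. Everything else reduces to Theorem \ref{dieter} together with the observation that a matrix obtained by selecting rows of a difference matrix is again a difference matrix.
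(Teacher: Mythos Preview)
Your proof is correct and follows essentially the same approach as the paper: both apply Jungnickel's composition (Theorem \ref{dieter}) to obtain the $(G\times H,7,7)$-DF, then observe that for each color $j$ the $j$-th lines arise from the same construction applied to the $(G,3,1)$- and $(H,3,1)$-DFs $\mathcal{F}_j$, $\mathcal{F}'_j$ together with the $3\times|H|$ submatrix $M_j$ of $M$ formed by rows $j,j+1,j+3$, which is an $(H,3,1)$-DM because selecting rows preserves the difference-matrix property. Your explicit attention to the inherited ordering and the row-to-point bookkeeping is a welcome clarification of what the paper leaves implicit.
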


\begin{proof}
Let us denote the three given ingredients by ${\cal F}$, $\cal F'$, and $M$, respectively.
We recall that ${\cal F}$ (resp. ${\cal F}'$) can be viewed as a $(G,7,7)$-DF whose blocks are ordered in such a way
that, for each $j$, the set ${\cal F}_j$ (resp. ${\cal F}'_j$) of all $j$-th lines
of ${\cal F}$ (resp. ${\cal F}'$) is a $(G,3,1)$ (resp. $(H,3,1)$) difference family.

For $1\leq j\leq 7$, denote by $M_j$ the $3\times |H|$ matrix obtained from $M$ by selecting the rows $j$, $j+1$, and $j+3$ (mod 7).
Note that $M_j$ is a $(H,3,1)$-DM since, 
in general, any set of $k'$ distinct rows of a $(H,k,1)$-DM form a $(H,k',1)$-DM. 
Consider the $(G\times H,7,7)$ difference family ${\cal F}\circ_M{\cal F}'$
constructed as in the proof of Proposition \ref{dieter}.
By the same proposition, one can see that for $1\leq j\leq 7$, the set of all the $j$-th lines
of ${\cal F}\circ_M{\cal F}'$ is the $(G\times H,3,1)$ difference family ${\cal F}_j\circ_{M_j}{\cal F}'_j$.
This proves that ${\cal F}\circ_M{\cal F}'$ is a FKDF$(G\times H)$. 
\end{proof}

In the next section the above theorem will be applied several times in conjunction with the
following well-known remark.

\begin{remark}\label{(q,7,1)-DM}
There exists a $(\F_q,k,1)$-DM for all prime powers $q\geq k$. Hence, in particular, we
have a $(\F_q,7,1)$-DM for any prime power $q\geq7$.
\end{remark}

We now present a very useful construction relying on pairwise balanced designs
which might be thought of as a generalization of Remark \ref{repeat}.
Recall that a $(v,K,1)$-{\it pairwise balanced design}, or $(v,K)$-PBD, is a pair $({\cal V},{\cal B})$ 
where $\cal V$ is a $v$-set of points and $\cal B$ is a set of blocks, with $|B|\in K$ for all $B\in \cal B$, 
having the property that each pair of points of $\cal V$ is contained in  exactly one block of $\cal B$; when $K=\{k\}$, this gives a 2-$(v,k,1)$ design. 

\begin{proposition} 
If there exists a $(v,K,1)$-PBD and a FK$(k)$ for all $k\in K$, then there exists a FK$(v)$. 
\end{proposition}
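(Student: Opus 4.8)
The plan is to build the desired FK$(v)$ by superimposing, on each block of the PBD, a copy of a Fano Kaleidoscope of the appropriate order, all drawn from the common palette of seven colors $c_0,c_1,\dots,c_6$. Let $({\cal V},{\cal B})$ be the given $(v,K,1)$-PBD. For each block $B\in{\cal B}$ we have $|B|=k$ for some $k\in K$, and by hypothesis a FK$(k)$ exists; I would fix one such Fano Kaleidoscope and realize it on the point set $B$, identifying the $k$ abstract points of the FK$(k)$ with the points of $B$ through an arbitrary bijection. This produces a set ${\cal F}_B$ of colored Fano planes all of whose points lie in $B$. I then take ${\cal F}=\bigcup_{B\in{\cal B}}{\cal F}_B$ as the candidate FK$(v)$; by construction every plane of ${\cal F}$ has its points in ${\cal V}$, as required by Definition \ref{defFK}.

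It then remains to verify the colored covering condition. Fix two distinct points $x,y\in{\cal V}$ and a color $c_i$. By the PBD property there is a \emph{unique} block $B\in{\cal B}$ containing both $x$ and $y$. Any plane of ${\cal F}$ whose $c_i$-colored line passes through $x$ and $y$ must have all three points of that line, and hence both $x$ and $y$, inside the single block on which it was built; since $B$ is the only block containing both points, such a plane can only belong to ${\cal F}_B$. Within ${\cal F}_B$ the FK$(k)$ property guarantees exactly one plane whose $c_i$-colored line contains $x$ and $y$. Hence ${\cal F}$ contains exactly one plane with this property, which establishes the claim.

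I expect no genuine obstacle here: the argument is a colored analogue of the standard PBD-closure technique, and its only delicate point is keeping the two independent ``exactly once'' conditions straight. The pair $\{x,y\}$ lies in a unique block at the PBD level, and within that block it is covered in color $c_i$ exactly once at the Kaleidoscope level; these combine cleanly precisely because two distinct blocks meet in at most one point and so can never both carry a colored line through $x$ and $y$. For the same reason I need no compatibility whatsoever between the colorings chosen on different blocks. This is also why the proposition deserves to be read as a generalization of Remark \ref{repeat}, which is recovered as the special case $K=\{7\}$, where the PBD is a $2$-$(v,7,1)$ design and each block receives the cyclically colored FK$(7)$ described there.
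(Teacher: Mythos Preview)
Your proof is correct and follows exactly the same approach as the paper's own argument: place a Fano Kaleidoscope on the point set of each block of the PBD and take the union. The paper's proof is a two-sentence sketch (``build a Fano Kaleidoscope on the points of each block\dots putting all these FKs together''), and your write-up simply unpacks the verification that the colored covering condition holds, using the uniqueness of the PBD block through a given pair of points.
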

\begin{proof}
By assumption, one can build a Fano Kaleidoscope on the points of each block of the PBD. It is clear that putting all these FKs together,  one obtains
the required FK$(v)$.
\end{proof}

\section{The case $v$ prime or a prime power}\label{ppower}
In the following, given a prime power $q$, the additive and multiplicative groups of the field of order $q$ will be denoted by $\F_q$ and $\F_q^*$,
respectively. If $q\equiv1$ (mod 6), we denote by $C^3$ the group of non-zero cubes of $\F_q$, namely the subgroup of $\F_q^*$ of index $3$.
The cosets of $C^3$ in $\F_q^*$ - also called the {\em cyclotomic classes of index $3$} - will be denoted by $C_0^3$ (that is $C^3$ itself), $C_1^3$, and $C_2^3$. 

We need the following result which is a special consequence of the well-known {\it lemma 
on evenly distributed differences} by R.M. Wilson \cite{W} (see also \cite{BJL}, Lemma 6.3, p. 500).

\begin{lemma}\label{wilson}
Assume that $q=6n+1$ is a prime power and that $\ell$ is a triple of elements of $\F_q$ such that $\Delta \ell=\{1,-1\}\cdot T$ 
with $T$ having exactly one element in each cyclotomic class of index $3$. Then, denoted by $S$ an arbitrary set of representatives for the cosets of $\{1,-1\}$
in $C^3$, we have that $\bigl{\{}\{sa,sb,sc\} \ | \ s\in S\bigl{\}}$ is a $(\F_q,3,1)$-DF.
\end{lemma}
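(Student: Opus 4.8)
The plan is to show that the family $\{s\ell \mid s\in S\}$ (writing $\ell=\{a,b,c\}$) is a $(\F_q,3,1)$-DF, which by definition means that the list of differences $\Delta\bigl(\bigcup_{s\in S}s\ell\bigr)$ covers every nonzero element of $\F_q$ exactly once. First I would compute this list of differences. Since scaling by $s$ is a field automorphism of the additive structure, one has $\Delta(s\ell)=s\cdot\Delta\ell$, and therefore
\[
\Delta\Bigl(\bigcup_{s\in S}s\ell\Bigr)=\biguplus_{s\in S}s\cdot\Delta\ell=\biguplus_{s\in S}s\cdot\bigl(\{1,-1\}\cdot T\bigr)=\bigl(\{1,-1\}\cdot S\bigr)\cdot T.
\]
The crucial observation is that $S$, being a set of representatives for the cosets of $\{1,-1\}$ in $C^3$, satisfies $\{1,-1\}\cdot S=C^3$ exactly (with no repetition), because $-1\in C^3$ when $q\equiv 1\pmod 6$ — indeed $-1$ is a cube since $3\mid\frac{q-1}{2}=3n$ forces $-1=g^{(q-1)/2}$ to lie in $C^3$. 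Hence the difference list equals $C^3\cdot T$ as a multiset.

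Next I would count. We have $|S|=|C^3|/2=(q-1)/6=n$ elements in $S$, and $\Delta\ell$ has $6$ elements (three ordered pairs and their negatives), so the total difference list has $6n=q-1$ entries, exactly the right cardinality to cover $\F_q^*$ once each. It remains to check this covering is \emph{without repetition}. Since $T$ has one element in each of $C_0^3,C_1^3,C_2^3$, write $T=\{t_0,t_1,t_2\}$ with $t_i\in C_i^3$. Then
\[
C^3\cdot T=\bigl(C^3 t_0\bigr)\uplus\bigl(C^3 t_1\bigr)\uplus\bigl(C^3 t_2\bigr)=C_0^3\uplus C_1^3\uplus C_2^3=\F_q^*,
\]
since multiplying the subgroup $C^3=C_0^3$ by a representative $t_i$ of $C_i^3$ yields precisely the coset $C_i^3$, and the three cosets partition $\F_q^*$. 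Each element of $\F_q^*$ therefore appears exactly once, which is the defining property of a $(\F_q,3,1)$-DF.

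The step I expect to be the genuine content, rather than bookkeeping, is the interplay between the two group structures: recognizing that collapsing $\{1,-1\}\cdot S$ back to the full cube group $C^3$ is legitimate precisely because $-1\in C^3$, and then seeing that $C^3\cdot T$ distributes across the three cyclotomic cosets because $T$ is a transversal of them. Everything else is a cardinality check. One subtlety worth stating carefully is that $\Delta\ell=\{1,-1\}\cdot T$ should be read as a multiset identity of size $6$, so that the factor $\{1,-1\}$ is genuinely responsible for the negation symmetry that lets $\{1,-1\}\cdot S$ reassemble into $C^3$; this is exactly the hypothesis that makes the representative set $S$ (of half the size) suffice. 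I would present the argument as the chain of multiset equalities above, flagging the use of $-1\in C^3$ as the one place where the congruence $q\equiv1\pmod 6$ is essential.
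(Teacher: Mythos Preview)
Your proof is correct and is essentially the standard argument underlying Wilson's lemma on evenly distributed differences: reassemble $\{1,-1\}\cdot S$ into $C^3$ (using $-1\in C^3$ when $q\equiv1\pmod6$), then observe that $C^3\cdot T$ sweeps out the three cyclotomic cosets because $T$ is a transversal. The paper does not actually supply a proof of this lemma; it simply states it as a special consequence of Wilson's result and cites \cite{W} and \cite{BJL}, so your write-up fills in exactly what the paper leaves to the references.
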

We also need an asymptotic result of the first author and A. Pasotti (Theorem 2.2.  in \cite{BP}) which 
modified and specialized to our situation has the following statement.
\begin{lemma}\label{BP} 
Let $q\equiv 1 \pmod{6}$ be a prime power and let $t$ be a positive integer.
Then, for any $t$-subset $C=\{c_1,\dots,c_t\}$ of $\F_q$ and for any ordered $t$-tuple $(\gamma_1,\dots,\gamma_t)$ of $\Z_3^t$,
the set $X:=\{x\in \F_q: x-c_i\in C_{\gamma_i}^3 \,\,{\rm  for }\,\, i=1,\dots,t \}$
is not empty as soon as $q$ is greater than a bound $Q(t)$ which is an increasing function of $t$. 
\end{lemma}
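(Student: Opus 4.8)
The plan is to follow the standard character-sum approach underlying the Buratti--Pasotti result and estimate $|X|$ directly via Weil's bound, showing it is strictly positive once $q$ is large. First I would fix a multiplicative character $\chi$ of $\F_q^*$ of order $3$ and a primitive cube root of unity $\omega$, normalized so that $\chi$ takes the value $\omega^{\gamma}$ on the cyclotomic class $C_\gamma^3$. With the usual convention $\chi^{j}(0)=0$, membership in a class is detected by
$$\mathbf{1}[\,y\in C_\gamma^3\,]=\frac{1}{3}\sum_{j=0}^{2}\omega^{-j\gamma}\chi^{j}(y),$$
which vanishes at $y=0$ and equals $1$ precisely when $y\in C_\gamma^3$. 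Since any element of $X$ must satisfy $x\neq c_i$ for every $i$ (as $0$ lies in no cyclotomic class), I may write $|X|$ as a sum over $x\in\F_q\setminus C$, where $C=\{c_1,\dots,c_t\}$, and on that domain each factor $\chi^{j_i}(x-c_i)$ legitimately equals $\chi\big((x-c_i)^{j_i}\big)$.

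Next I would expand the product of the $t$ indicators and interchange the order of summation to obtain
$$|X|=\frac{1}{3^{t}}\sum_{\mathbf{j}\in\{0,1,2\}^{t}}\omega^{-\sum_{i=1}^{t}j_i\gamma_i}\sum_{x\in\F_q\setminus C}\chi\Bigl(\prod_{i=1}^{t}(x-c_i)^{j_i}\Bigr),$$
with $\mathbf{j}=(j_1,\dots,j_t)$. The term indexed by $\mathbf{j}=(0,\dots,0)$ is the main term: the inner polynomial is identically $1$, so this contributes exactly $q-t$. For every other tuple $\mathbf{j}$ some exponent $j_i$ lies in $\{1,2\}$, and because the $c_i$ are pairwise distinct the polynomial $g_{\mathbf{j}}(x)=\prod_i(x-c_i)^{j_i}$ is not a constant times a perfect cube in $\F_q[x]$; hence $\chi\circ g_{\mathbf{j}}$ gives a nontrivial multiplicative character sum, and Weil's theorem bounds it by $(t-1)\sqrt{q}$, the number of distinct roots of $g_{\mathbf{j}}$ being at most $t$. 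Restricting the summation to $x\notin C$ perturbs this by at most $t$.

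Assembling these estimates yields a lower bound of the shape
$$|X|\ge\frac{1}{3^{t}}\Bigl(q-t-(3^{t}-1)(t-1)\sqrt{q}\,\Bigr),$$
which is strictly positive as soon as $q-t>(3^{t}-1)(t-1)\sqrt{q}$; this holds for all $q$ exceeding an explicit threshold $Q(t)$ of order $3^{2t}t^{2}$, which is manifestly increasing in $t$, as required. I expect the one genuinely delicate step to be the application of Weil's bound, where one must confirm that $g_{\mathbf{j}}$ is not a perfect cube so that the character sum is truly nontrivial — and this is exactly the point at which the hypothesis that the $c_i$ are distinct is essential, since distinctness prevents the exponents from collapsing to a common multiple of $3$. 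The remaining bookkeeping needed to extract the precise form of $Q(t)$ is then routine.
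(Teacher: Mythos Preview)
The paper does not actually prove this lemma; it is quoted as a specialization of Theorem~2.2 of Buratti--Pasotti \cite{BP}, and only the resulting numerical bounds $Q(t)$ are tabulated. Your character-sum argument via Weil's theorem is correct and is precisely the method behind the cited result: detect each cyclotomic condition with a cubic character, expand, isolate the main term $q-t$, and bound each nontrivial inner sum by $(s-1)\sqrt{q}$ with $s\le t$ using the fact that distinctness of the $c_i$ prevents $\prod_i(x-c_i)^{j_i}$ from being a cube. One cosmetic point: your displayed lower bound drops the additive perturbation $t$ per error term that you yourself noted when restricting to $x\notin C$, so the inequality should read $|X|\ge 3^{-t}\bigl(q-t-(3^t-1)((t-1)\sqrt{q}+t)\bigr)$; this does not affect the conclusion, and the threshold you extract is of the same order as the paper's $Q(t)$.
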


The bound $Q(t)$ increases dramatically with $t$. In the table below we 
report its exact value - easily deducible from \cite{BP} - for $1\leq t\leq 8$:
\small
\begin{center}
\begin{tabular}{|c|c|c|c|c|c|c|c|c|c|c|cl}
\hline
$\bf t$ & $1$ & $2$ & $3$ & $4$& $5$ & $6$& $7$ & $8$\\
\hline
$\bf Q(t)$ & $1$ &36 & 939 & 19,350 & 326,661 & 4,790,260 & 64,391,800 & 808,659,000 \\ 
 \hline
\end{tabular}
\end{center}

\normalsize

We are now ready to prove the crucial result of this section.

\begin{theorem}\label{FK(q)} A regular {\rm FK}$(q)$ exists whenever $q$ is a prime or a prime power 
congruent to $1$ $($mod $6)$ provided that $q\ne 13$.\end{theorem}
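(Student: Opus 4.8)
The plan is to reduce the construction of a regular FK$(q)$ to the existence of a \emph{single} well-chosen base block, and then to produce that block by cyclotomic considerations. By Proposition \ref{FKDF} it suffices to exhibit a FKDF$(q)$ in the additive group $\F_q$, and the key point is that \emph{one} base block is enough. Suppose $B=(b_0,b_1,\dots,b_6)$ is an ordered $7$-tuple such that for every $j\in\{0,\dots,6\}$ the line $\ell_j(B)=\{b_j,b_{j+1},b_{j+3}\}$ has evenly distributed differences in the sense of Lemma \ref{wilson}, i.e.\ $\Delta\ell_j(B)=\{1,-1\}\cdot T_j$ with $T_j$ meeting each cyclotomic class of index $3$ exactly once. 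Let $S$ be a set of representatives for the cosets of $\{1,-1\}$ in $C^3$ as in Lemma \ref{wilson}, so that $|S|=(q-1)/6=t$, and develop the whole block, i.e.\ take $\{sB \ | \ s\in S\}$. Since $\ell_j(sB)=s\,\ell_j(B)$, Lemma \ref{wilson} applied to each base triple $\ell_j(B)$ shows that every family ${\cal F}_j=\{s\,\ell_j(B) \ | \ s\in S\}$ is a $(\F_q,3,1)$-DF; hence $\{sB \ | \ s\in S\}$ is a FKDF$(q)$. This is exactly the single-orbit structure of Example \ref{19}, where one can check $B_2=7B_1$ and $B_3=7^2B_1$.

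I would then rephrase the requirement on $B$ purely combinatorially. Because in a Fano plane every pair of points lies on a unique line, the seven lines of $B$ partition the $21$ pairs of $\{b_0,\dots,b_6\}$ into seven pairwise edge-disjoint triangles. The condition above says precisely that, for each line, its three pairwise differences fall into three \emph{distinct} cyclotomic classes of index $3$; one verifies that this clean criterion is the correct one whether or not $-1\in C^3$, since in both cases ``$\Delta\ell=\{1,-1\}\cdot T$ with $T$ a transversal'' is equivalent to the three unsigned differences lying in three distinct classes. As the seven triangles are edge-disjoint, a target class for each of the $21$ differences may be prescribed completely freely, line by line, so a globally consistent assignment always exists.

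To realise such an assignment I would build $B$ one point at a time, invoking Lemma \ref{BP}. After normalising $b_0=0$ and $b_1=1$ using the affine action $x\mapsto ax+b$, I add the remaining five points in some order; when a new point $x$ is introduced it must satisfy the prescribed relation $x-b_i\in C^3_{\gamma_i}$ for each already-placed $b_i$, a system of at most six constraints of exactly the form treated in Lemma \ref{BP}. That lemma produces a valid $x$ as soon as $q>Q(t)$, where $t$ is the number of constraints; since the last point placed pairs with all six earlier points, the worst case is $t=6$, and the base block exists whenever $q>Q(6)=4{,}790{,}260$. This settles all sufficiently large admissible $q$.

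The remaining admissible prime powers $7\le q\le Q(6)$ with $q\ne 13$ would be handled by a direct, computer-assisted search for a base block, pruned by the same cyclotomic constraints. I expect the genuine difficulties to be twofold. First, the bound $Q(6)$ grows so fast that, although each individual check is easy, the finite verification must cover a very large range of $q$; this is the main obstacle and the reason a purely theoretical argument for the small cases seems out of reach. Second, $q=13$ is a true exception, for which no suitable base block exists and which must therefore be excluded from the statement. A minor technical point to dispatch carefully throughout is the behaviour of $\pm1$ relative to the cubes, but as noted above this does not affect the ``three distinct classes'' criterion that drives the whole argument.
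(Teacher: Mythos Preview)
Your proposal is correct and follows essentially the same strategy as the paper: reduce to a single base block whose seven lines have evenly distributed differences, invoke Lemma~\ref{BP} for large $q$, and finish by computer for the remaining range. Two refinements in the paper are worth noting. First, instead of placing points one at a time (which forces the last point to meet six constraints and hence the bound $Q(6)$), the paper looks for a block of the symmetric shape $B=(0,1,-1,\overline x,-\overline x,\overline y,-\overline y)$; the sign symmetry collapses several lines to the same difference pattern, so only five constraints are imposed on $\overline y$ and the bound drops to $Q(5)=326{,}661$, shrinking the computer search by more than an order of magnitude. Second, for non-prime prime powers the paper does not search directly: it combines the FKDFs for primes with $(\F_q,7,1)$-difference matrices via Theorem~\ref{composingFKDF}, so that below $Q(5)$ one only needs to find base blocks for primes $p$, for $p^2$ with $p\equiv 5\pmod 6$ and $p\le\sqrt{Q(5)}$, and for $13^2,13^3$. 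Your greedy placement would also work, but these two devices are what make the finite verification comfortably tractable. (Incidentally, your caveat about $-1$ is harmless here: since $q\equiv 1\pmod 6$ we have $2\mid (q-1)/3$, so $-1$ is always a cube and the ``three distinct classes'' criterion is exactly the hypothesis of Lemma~\ref{wilson}.)
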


\begin{proof}
Let $q\equiv1$ (mod 6) be a prime power.
To build a FK$(q)$, it is enough to show that there exists a special block ({\em initial block}), 
$B=(b_0,b_1,\dots,b_6)$
with the property that the $i$-th line of $B$ satisfies the condition of Lemma \ref{wilson}
for $0\leq i\leq 6$.
Indeed in this case it is clear that 
$$\{(sb_0,sb_1,sb_2,sb_3,sb_4,sb_5,sb_6) \ | \ s\in S\}$$
is a FKDF$(q)$ for any arbitrarily chosen set $S$ of representatives for the cosets of $\{1,-1\}$
in $C^3$, and then the assertion follows from Proposition \ref{FKDF}.

We prove the existence of our initial block $B$ distinguishing the two cases $q>Q(5)$ that we
solve with the use of Lemma \ref{BP}, and $q<Q(5)$ that we essentially solved by computer search. 

\medskip\noindent
\underline{Case 1}: $q>Q(5)$.

This case heavily relies on Lemma \ref{BP}. It is convenient to split it into two subcases 
according to whether 2 is or is not a cube in $\F_q$.

\medskip\noindent
\underline{Subcase 1a}: $q>Q(5)$ and $2\in C^3$.
 
The set 
$$X=\bigl{\{}x\in \F_q \ : \ \{x, x+1\}\subset C^3_1; \ x-1\in C^3_2\bigl{\}}$$
is not empty for $q>Q(3)$ by Lemma \ref{BP}. Hence, a fortiori, we have $X\neq\emptyset$ in view of our assumption that $q>Q(5)$.
Fix any element $\overline x\in X$ and consider the set
$$Y=\bigl{\{}y\in \F_q \ : \ \{y+1, \  y+\overline x\}\subset C^3_0; \ y-1\in C^3_1; \ \{ y, \ y-\overline x\}\subset C^3_2\bigl{\}}.$$ 
This set is not empty for $q>Q(5)$ by Lemma \ref{BP} again. Fix any element $\overline y\in Y$ and consider 
the 7-tuple $$B=(0, \ 1, \ -1, \ \overline x, \ -\overline x, \ \overline y, \ -\overline y).$$
The seven lines of $B$ are:
$$\ell_0=\{0,1,\overline x\};\quad \ell_1=\{1,-1,-\overline x\};\quad \ell_2=\{-1,\overline x, \overline y\};\quad \ell_3=\{\overline x,-\overline x,-\overline y\};$$
$$\ell_4=\{-\overline x,\overline y,0\};\quad \ell_5=\{\overline y, -\overline y,1\};\quad \ell_6=\{-\overline y,0,-1\}.$$
We have $\Delta \ell_i=\{1,-1\}\cdot T_i$ with:
$$T_0=\{1,\overline x,\overline x-1\};\quad T_1=\{2,\overline x+1,\overline x-1\};\quad T_2=\{\overline x+1,\overline y+1,\overline y-\overline x\};$$
$$T_3=\{2\overline x,\overline y+\overline x,\overline y-\overline x\};\quad
T_4=\{\overline y+\overline x,\overline x,\overline y\};\quad
T_5=\{2\overline y,\overline y-1,\overline y+1\};$$
$$T_6=\{1,\overline y,\overline y-1\}$$
The reader can easily see that our hypotheses that $2\in C^3$, $\overline x\in X$ and $\overline y\in Y$ imply that  
$\ell_i$ satisfies the assumption of Lemma \ref{wilson} and then $B$ is the desired initial block of our FK$(q)$.

\medskip\noindent
\underline{Subcase 1b}: $q>Q(5)$ and $2\notin C^3$.
 
We have $2\in C^3_i$ with $i=1$ or 2.
Take, as you like, an element $\overline x$ belonging to the set 
$$X=\bigl{\{}x\in \F_q \ : \ x+1\in C^3_0; \quad x\in C^3_i; \ x-1\in C^3_{2i}\bigl{\}}$$
which is not empty by Lemma \ref{BP}.
Then take any element $\overline y$ of the set
$$Y=\bigl{\{}y\in \F_q \ : \  y+\overline x\in C^3_0; \ \{y-1, y-\overline x\} \subset C^3_i; \ \{y, \  y+1\}\subset C^3_{2i}\bigl{\}}$$
which is also not empty by Lemma \ref{BP}.
As in the previous case the reader can easily check that an initial block of our FK$(q)$ is given by
the 7-tuple\break $B=(0, \ 1, \ -1, \ \overline x, \ -\overline x, \ \overline y, \ -\overline y)$.

\medskip\noindent
\underline{Case 2}: $q<Q(5)$.

Here we split our search for a FKDF$(q)$ into five subcases according to whether: 
\begin{itemize}
\item [a)] $q$ is a prime;
\item [b)] $q=p^n$ with  $13\neq p\equiv1$ (mod 6) a prime; 
\item [c)] $q=p^{2n}$ with $p\equiv5$ (mod 12) a prime; 
\item [d)] $q=p^{2n}$ with $p\equiv11$ (mod 12) a prime;
\item [e)] $q=13^n$ with $n\geq2$.
\end{itemize}

\medskip\noindent
\underline{Subcase 2a}: $q$ is a prime $p>13$.

To find a FKDF$(p)$ for all admissible primes $p$ in the range $]13,Q(5)]$,
it is computationally convenient to look for an initial block of the form 
$$B(x):=(0,1,2,x,x+1,x^2+x,2x)$$ 
for  a suitable choice of $x\in\Z_p$.  Indeed, for a block of this form it is enough to check the cyclotomic conditions 
on the differences of only three of the seven possible lines, namely $\ell_0=\{0,1,x\},\ell_2=\{2,x,x^2+x\},
\ell_5=\{x^2+x,2x,1\}$, since we have: $$\Delta \ell_0=\Delta \ell_1=\Delta \ell_3;\quad\quad \Delta \ell_4=(x+1)\cdot\Delta \ell_0;\quad\quad
\Delta \ell_6=2\cdot\Delta \ell_0.$$ 

An element $x$ having the property that  $B(x)$ is an initial block has been 
found by computer search for all admissible primes $p<Q(5)$, except for those belonging to the list 
$L=\{13,19,31,43,61,79,127,199\}$. 
Here is a table of the value of $x$ found for almost all primes $p$ up to 600.  
We omitted the primes $p\equiv1$ (mod 42) greater than 127 since for each of these primes a 2-$(p,7,1)$ design is known \cite{AG} and then
the existence of a FK$(p)$ is already guaranteed by Remark \ref{repeat}.

\begin{center}
\begin{tabular}{|c|c|||c|c|||c|c|||c|c|||c|c|}
\hline
$p$ & $x$ & $p$ & $x$& $p$ & $x$& $p$ &$x$ &$p$ &$x$\\
\hline
37&13 & 67&61 & 73&35 & 97&5 & 103&18\\ 
\hline
109&26 & 139&47 & 151&12 & 157&84 & 163&55\\
\hline
181&61 & 193&78 & 223&143 & 229&37 & 241&20\\
\hline
271&89 & 277&47 & 283&7 & 307&23 & 313&92\\
\hline
 331&48 & 349&55 & 367&34 & 373&122 & 397&19\\
 \hline
 409&37 & 433&24 & 439&174 & 457&147& 487&111\\
 \hline
 499&87 & 523&133 & 541&10 & 571&3 & 577&80\\
 \hline
\end{tabular}

\end{center}

An exhaustive search has ruled out the existence of a regular FK$(13)$.
An example of a FKDF$(19)$ has been already given in Example \ref{19}.
For the remaining primes in $L$ we found an alternative initial block $B$ as follows.

\begin{center}

\begin{tabular}{|c|c|}
\hline
$p$  & $B$\\
\hline
31 & $(0,1,2,12,13,27,24)$ \\
43& $(0,1,2,7,8,37,38)$\\
61& $(0,1,2,5,6,41,10)$\\
79& $(0,1,2,24,25,11,48)$\\ 
127& $(0,1,2,12,13,87,24)$\\ 
199& $(0,1,2,4,5,71,8)$\\ 
 \hline
\end{tabular}

\end{center}

\medskip\noindent
\underline{Subcase 2b}: $q=p^n$ with $p$ prime, $13\neq p\equiv1$ (mod 6).

Here we already know that there exists a FKDF$(p)$ from Subcase 2a. Then, by Remark \ref{(q,7,1)-DM} and by iterated application of
Theorem \ref{composingFKDF} we have a FKDF$(p^n)$ in $\Z_p^n$ for any $n$.

\medskip\noindent
\underline{Subcase 2c}: $q=p^{2n}$ with $p$ prime, $p\equiv5$ (mod 6).

It is enough to prove the existence of a FKDF$(p^2)$ in $\Z_p^2$ thought of as the additive group of $\F_{p^2}$.
Indeed, under this assumption,  then one gets a FKDF$(p^{2n})$
in $\Z_p^{2n}$ by Remark \ref{(q,7,1)-DM} and the iterated use of Theorem \ref{composingFKDF}.
Also, by the assumption $q<Q(5)$, we can limit our research to primes $p<\sqrt{Q(5)}$.
Thus we have to work in the finite field $\F_{p^2}$ with $p<571$. 
Note that by the law of quadratic reciprocity (see, e.g., \cite{L}), 3 is not a square (mod $p$), hence $\F_{p^2}$ can be identified  with $\Z_p[t]/(t^2-3)$.
Here it is convenient to look  for an initial block  of the form $B=(1,x,x^2,x^3,x^4,x^5,x^6)$ for a suitable $x\in \F_{p^2}$ since once more we need to
check the cyclotomic conditions of the differences of only three lines of $B$ that here are $\ell_0(B)=\{1,x,x^3\}$, $\ell_4(B)=\{x^4,x^5,1\}$, 
and $\ell_6(B)=\{x^6,1,x^2\}$. Such an $x$ 
has been always found by computer search in the range we are interested in, as shown in the table below.

\small
\begin{center}
\begin{tabular}{|C|C||||C|C||||C|C||||C|C|}
\hline
p & x & p & x& p & x& p & x\\
\hline
5&4+t        & 113 & 1+39t  & 269& 1+65t  & 449 & 1+8t\\ 
\hline
17&6+3t     & 137 & 1+63t      & 281& 1+7t  & 461 & 1+8t   \\
\hline
29&1+2t    &  149 & 1+17t   & 293& 3+9t  & 509 & 1+103t   \\
\hline
41&3+15t & 173 &1+34t & 317& 1+27t  & 521 & 1+82t\\
\hline
53&1+19t & 197& 2+18t  &  353& 1+9t & 557 & 1+7t \\
\hline
89&1+15t & 233& 1+99t & 389& 1+11t & 569 & 1+116t\\
\hline
101&1+43t & 257 & 1+33t & 401& 1+40t & &\\
\hline
\end{tabular}
\end{center}

\normalsize
\medskip\noindent
\underline{Subcase 2d}: $q=p^{2n}$ with $p$ prime, $p\equiv11$ (mod 6).

For the same reasons as in the previous subcase it is enough to find a FKDF$(p^2)$ in the additive group of $\F_{p^2}$
for $p\leq571$. 
Here we have $p\equiv3$ (mod 4), hence $-1$ is not a square (mod $p$) and then $\F_{p^2}$ can be identified  with $\Z_p[t]/(t^2+1)$.
Once more we looked for an initial block  of the form $(1,x,x^2,x^3,x^4,x^5,x^6)$ for a suitable $x\in \F_{p^2}$, and such an $x$ 
has been always found by computer search in the range we are interested in.
Here are our computer results.

\small
\begin{center}

\begin{tabular}{|C|C||||C|C||||C|C||||C|C|}
\hline
p & x & p & x& p & x& p &x \\
\hline
11&3+4t       & 131 & 1+22t  & 263&1+56t   & 443&1+122t   \\ 
\hline
23&1+11t     & 167 &3+9t  &  311&2+41t    & 467&1+31t\\
\hline
47&2+12t     &  179&1+8t   &  347&1+16t  & 479&1+103t\\
\hline
59 &2+15t   & 191&1+23t  &  359&1+157t & 491&1+126t\\
\hline
71&2+32t     & 227&1+91t  &383 & 1+122t& 503&1+50t\\
\hline
 83&2+3t      & 239&1+101t &419 & 1+30t& 563&1+73t\\
 \hline
 107&2+51t   &251 &1+42t  &431 & 1+15t& &\\
 \hline
\end{tabular}
\end{center}

\normalsize

\medskip\noindent
\underline{Subcase 2e}: $q=13^n$ with $n>1$.
 
Identifying $\F_{13^2}$ with $\Z_{13}[t]/(t^2-2)$, it is possible to check that the 7-ple $(0,1,2,x,x+1,x^2+x,2x)$, $x=6+2t$,
is the initial block for a FKDF$(13^2)$
in the additive group of $\F_{13^2}$.
Also, if we see $\F_{13^3}$ as $\Z_{13}[t]/(t^3-2)$, then the 7-ple 
$(1,x,x^2,x^3,x^4,x^5,x^6)$, $x=10+7t+11t^2$, is the initial block for a FKDF$(13^3)$
in the additive group of $\F_{13^3}$.

Now, for the existence of a FKDF$(13^n)$ for $n\geq4$, it is enough to combine iteratively the two FKDFs above using Remark \ref{(q,7,1)-DM}
and Theorem \ref{composingFKDF}.

This completes the proof.
\end{proof}

\begin{remark}
Note that, when $q\equiv 3\pmod{4}$, we might take the group $C^6$ of sixth powers in $\F_q^*$ as the  transversal set $S$ for the cosets of $\{1,-1\}$ in $C^3$ required in the proof; in this case the set of blocks is $${\cal B}=\{m{B}+\tau, m\in C^6, \tau\in \F_q\},$$ so that $\F_q \rtimes C^6$ is an automorphism group of our FK$(v)$ which acts sharply transitively on the blocks. We are in this situation for instance when $q=19$ (Example \ref{19}), where the group $C^6$ is $\{1,2^6,2^{12}\}=\{1,7,11\}$ and the initial block is ${B}=(0,1,2,4,5,11,8)$.
\end{remark}

Let us mention the curious fact that the 7-ple $A=(0,1,2,3,4,5,6)$  can be an initial block for a FKDF$(p)$, $p$ prime. 
The lines of $A$ are $\ell_1=\{0,1,3\}$, $\ell_2=\{1,2,4\}$, $\ell_3=\{2,3,5\}$, $\ell_4=\{3,4,6\}$,
$\ell_5=\{5,6,1\}$, $\ell_6=\{6,0,2\}$ and their lists of differences are the following:
$$\Delta\ell_1=\Delta\ell_2=\Delta\ell_3=\Delta\ell_4=\{1,-1\}\cdot\{1,2,3\};$$
$$\Delta\ell_5=\{1,-1\}\cdot\{1,4,5\};\quad \Delta\ell_6=\{2,-2\}\cdot\{1,2,3\}.$$
It follows that $A$ is the initial block of a FKDF$(p)$ if both the triples $\{1,2,3\}$ and $\{1,4,5\}$
are systems of representatives for the cyclotomic classes of index 3. The first triple, $\{1,2,3\}$,
is such a system provided that $2\in C^3_i$ and $3\in C^3_{2i}$ with $i=1$ or $2$.
Now, if $2\in C^3_i$ - which implies $4\in C^3_{2i}$ - the triple $\{1,4,5\}$ is a  
system of representatives for the cyclotomic classes of index 3 provided that $5\in C^{3}_i$.
We conclude that $A$ is the initial block of a FKDF$(p)$ when $\{2,5\}\subset C^3_i$ and $3\in C^3_{2i}$ for $i=1$ or 2.
These conditions can be equivalently restated as follows.

\begin{proposition}
Let 
$p\equiv1$ $($mod $6)$ 
be a prime such that $2$ is not a cube while both $6$ and $20$ are cubes modulo $p$.
Then $A=(0,1,2,3,4,5,6)$ is the initial block of a FKDF$(p)$. 
\end{proposition}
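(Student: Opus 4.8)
The plan is to follow the thread of the discussion preceding the statement and recast its conclusion in the multiplicative language of the hypotheses. Recall from Definition \ref{defFKDF} and the proof of Theorem \ref{FK(q)} that asserting that $A$ is an initial block of a FKDF$(p)$ means precisely that each of its seven lines $\ell_i(A)$ meets the hypothesis of Lemma \ref{wilson}: the list of differences of each line must be expressible as $\{1,-1\}\cdot T$ with $T$ a complete set of representatives for the three cyclotomic classes $C^3_0,C^3_1,C^3_2$ of index $3$. (By Proposition \ref{FKDF} this in turn yields a regular FK$(p)$, but that is not needed for the statement itself.)

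First I would note that, as computed just above, the seven lines of $A$ realize only three distinct difference patterns, namely $\{1,-1\}\cdot\{1,2,3\}$, $\{1,-1\}\cdot\{1,4,5\}$, and $\{2,-2\}\cdot\{1,2,3\}=\{1,-1\}\cdot\{2,4,6\}$. Hence Lemma \ref{wilson} applies to every line as soon as each of the triples $\{1,2,3\}$, $\{1,4,5\}$ and $\{2,4,6\}$ is a system of representatives for $C^3_0,C^3_1,C^3_2$. Here I would record the structural remark that discards the third triple: since $\{2,4,6\}=2\cdot\{1,2,3\}$ and multiplication by the fixed nonzero element $2$ merely permutes the three cyclotomic classes, $\{2,4,6\}$ is a representative system automatically whenever $\{1,2,3\}$ is. It therefore suffices to treat $\{1,2,3\}$ and $\{1,4,5\}$, exactly as in the paragraph before the proposition.

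Next I would pass to the index map $\mathrm{ind}\colon\F_p^*\to\Z_3$ defined by $a\in C^3_{\mathrm{ind}(a)}$, which satisfies $\mathrm{ind}(ab)=\mathrm{ind}(a)+\mathrm{ind}(b)$. The hypothesis that $2$ is not a cube gives $\mathrm{ind}(2)=i$ with $i\in\{1,2\}$; that $6$ is a cube gives $\mathrm{ind}(3)=-\mathrm{ind}(2)=2i$; and that $20=2^2\cdot5$ is a cube gives $\mathrm{ind}(5)=-2\,\mathrm{ind}(2)=i$. Reading off indices, $\{1,2,3\}$ has index set $\{0,i,2i\}=\{0,1,2\}$ and $\{1,4,5\}$ has index set $\{0,2i,i\}=\{0,1,2\}$, both for $i=1$ and for $i=2$. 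Each is therefore a representative system, so every line of $A$ satisfies the hypothesis of Lemma \ref{wilson} and $A$ is an initial block of a FKDF$(p)$, as claimed.

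There is no deep obstacle here: the real content already lies in the difference computations recorded before the statement, and the proof is a short bookkeeping exercise with the index map. The one point deserving care -- and the only spot where a slip could occur -- is the line whose difference list carries the scaling $\{2,-2\}$ rather than $\{1,-1\}$; one must rewrite it as $\{1,-1\}\cdot\{2,4,6\}$ to bring it into the exact shape demanded by Lemma \ref{wilson} and then observe that its cyclotomic condition is subsumed by the one for $\{1,2,3\}$. I would also verify explicitly that the three hypotheses are mutually consistent -- they force $\mathrm{ind}(5)=\mathrm{ind}(2)$, so the derived conditions $\{2,5\}\subset C^3_i$ and $3\in C^3_{2i}$ are simultaneously attainable for both admissible values $i=1$ and $i=2$.
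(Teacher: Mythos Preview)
Your proof is correct and follows essentially the same approach as the paper: reduce to the two triples $\{1,2,3\}$ and $\{1,4,5\}$ using the difference computations given just before the statement, then translate the hypotheses on $2$, $6$, and $20$ into the cyclotomic conditions $\{2,5\}\subset C^3_i$ and $3\in C^3_{2i}$ via the multiplicativity of the index. Your explicit rewriting of $\{2,-2\}\cdot\{1,2,3\}$ as $\{1,-1\}\cdot\{2,4,6\}$ to match the exact form in Lemma \ref{wilson} is a welcome clarification, but the argument is otherwise identical to the one the paper sketches.
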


The set of primes up to $1000$ satisfying the conditions of the previous proposition is  $\{7, 541,$ $ 571, 877, 937\}$.

\section{Recursive constructions}\label{PBD}

In this section we denote by $Q_{1(6)}$ the set of all prime powers congruent to 1 modulo 6.

By Theorem \ref{FK(q)}, Remark \ref{(q,7,1)-DM}, and the iterated 
use of Theorem \ref{dieter},  we have the following result.

\begin{theorem}\label{recursive}
If all maximal prime-power factors of an integer $v$ belong to  $Q_{1(6)}\setminus\{13\}$,
then there exists a regular FK$(v)$.
\end{theorem}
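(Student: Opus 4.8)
The plan is to factor $v$ into its maximal prime-power factors and then assemble a Fano Kaleidoscopic difference family on the direct product of the associated additive groups by repeatedly applying the composition construction of Theorem \ref{composingFKDF}; Proposition \ref{FKDF} will then convert this family into the desired regular FK$(v)$.

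First I would write $v=q_1q_2\cdots q_r$, where the $q_i$ are the maximal prime-power factors of $v$, so that by hypothesis each $q_i\in Q_{1(6)}\setminus\{13\}$. Two ingredients are then available for each $i$. On the one hand, Theorem \ref{FK(q)} guarantees a regular FK$(q_i)$, and its proof in fact produces a FKDF in the additive group of $\F_{q_i}$, which is precisely what the composition construction consumes. On the other hand, since $q_i\equiv1\pmod{6}$ forces $q_i\geq7$, Remark \ref{(q,7,1)-DM} supplies a $(\F_{q_i},7,1)$-difference matrix for every $i$.

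The heart of the argument is an induction on $r$, with base case $r=1$ being exactly Theorem \ref{FK(q)}. For the inductive step, assume a FKDF has been constructed in $G:=\F_{q_1}\times\cdots\times\F_{q_{r-1}}$. Feeding this FKDF$(G)$, the FKDF$(\F_{q_r})$ coming from Theorem \ref{FK(q)}, and the $(\F_{q_r},7,1)$-DM coming from Remark \ref{(q,7,1)-DM} into Theorem \ref{composingFKDF} yields a FKDF in $G\times\F_{q_r}$. After $r-1$ iterations one obtains a FKDF in $\F_{q_1}\times\cdots\times\F_{q_r}$, a group of order $\prod_{i}q_i=v$, and Proposition \ref{FKDF} turns this FKDF$(v)$ into a regular FK$(v)$. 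Because every ingredient is already furnished by the cited results, there is no substantive obstacle here; the only point meriting care is to confirm at each stage that the difference matrix demanded by Theorem \ref{composingFKDF} exists for the factor being adjoined, which is exactly what Remark \ref{(q,7,1)-DM} secures since each admissible prime-power factor is at least $7$.
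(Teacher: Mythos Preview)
Your proof is correct and follows essentially the same route as the paper, which justifies the theorem in one line by citing Theorem \ref{FK(q)}, Remark \ref{(q,7,1)-DM}, and iterated composition. You invoke Theorem \ref{composingFKDF} rather than Theorem \ref{dieter}, which is actually the more precise citation for composing FKDFs, but the underlying argument is identical.
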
 

The following result \cite{MS,Gr} emphasizes the significance of Theorem \ref{FK(q)}.

\begin{theorem}\label{MS}
There exists a $(v, Q_{1(6)},1)$-PBD for all $v$ congruent to $1 \pmod{6}$ with the following $22$ exceptions

\medskip\noindent\footnotesize
$55, 115, 145, 205, 235, 265, 319, 355, 391, 415, 445, 451, 493, 649, 667, 685, 697,745, 781, 799, 805, 1315;$

\medskip\normalsize\noindent
of these, $55$ is a definite exception and the other values are possible exceptions.
\end{theorem}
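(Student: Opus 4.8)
The plan is to treat this as a \emph{PBD-closure} problem in Wilson's sense: set $K=Q_{1(6)}$ and show that every admissible $v$ lies in the closure $B(K)$, apart from the listed values. First I would record the necessary condition. Fix a point $x$ in a hypothetical $(v,K,1)$-PBD; the blocks through $x$ partition the remaining $v-1$ points, and each such block has size $k\equiv 1\pmod 6$, hence contributes $k-1\equiv 0\pmod 6$ points. Summing gives $v-1\equiv 0\pmod 6$, i.e. $v\equiv 1\pmod 6$, which is exactly the congruence in the statement. Since $\gcd_{k\in K}(k-1)=6$ and $\gcd_{k\in K}k(k-1)=6$, the standard pair of divisibility conditions for an index-$1$ PBD both collapse to this single congruence, so the task is to prove that $v\equiv 1\pmod 6$ is also \emph{sufficient} except for finitely many small values.

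For all sufficiently large admissible $v$, I would invoke Wilson's asymptotic existence theorem for pairwise balanced designs. The set $K$ is extremely rich, containing \emph{every} prime power congruent to $1\pmod 6$, namely $7,13,19,25,31,37,43,49,\dots$; consequently there is an explicit threshold $v_0$ such that a $(v,K,1)$-PBD exists for every $v\ge v_0$ with $v\equiv 1\pmod 6$. This settles the entire infinite tail in one step and reduces the problem to the finite interval of admissible $v$ below $v_0$.

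The heart of the argument is then this finite range $v<v_0$, which I would attack with the standard recursive machinery. From a transversal design $TD(k,m)$ (equivalently, from $k-2$ mutually orthogonal Latin squares of order $m$) one deletes points to tune the order and adjoins a point to the groups, producing a $(v,K,1)$-PBD out of ingredient PBDs of orders $m$ and $m+1$ whose block sizes already lie in $K$ (Wilson's Fundamental Construction via group-divisible designs). Iterating these inflation/truncation/filling operations over a carefully ordered list of seed orders should cover every admissible $v<v_0$ except a finite residue set, and that residue set is precisely the $22$ listed candidates.

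The main obstacle, and where the genuine work lies, is the resolution of these $22$ values, each of which must be either constructed or ruled out. The true exception $v=55$ requires a dedicated nonexistence proof: with $\min K=7$ one has $v-1=54=6\cdot 9$, so every point lies on at most nine blocks, and a $2$-$(55,7,1)$ design is already excluded by the non-integrality of $b=55\cdot 9/7$; eliminating \emph{every} mixed-block-size configuration on $55$ points then demands a finite but delicate counting and exhaustive analysis of how blocks of sizes $7,13,19,25,\dots$ can coexist. For the remaining $21$ values the difficulty runs the other way: the recursive constructions just fail to apply and no direct design is known, so they are recorded as \emph{possible} exceptions rather than being decided in either direction. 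Pinning down exactly this list, neither larger nor smaller, is the delicate bookkeeping carried out in the cited work \cite{MS,Gr}.
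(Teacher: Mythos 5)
First, a point of reference: the paper does not prove this theorem at all. It is imported verbatim from Mullin--Stinson \cite{MS} together with Greig's later improvements \cite{Gr}, so there is no internal argument to match your sketch against; what you have written is a reconstruction of how that literature might proceed. Parts of it are sound: the computation that $\gcd_{k\in K}(k-1)=\gcd_{k\in K}k(k-1)=6$, so that both standard divisibility conditions collapse to $v\equiv 1\pmod 6$, is correct, and the general toolkit you name (PBD-closure, truncated transversal designs, filling in groups) is indeed the machinery of \cite{MS}.

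The genuine gap is the asymptotic step, and it is fatal to the outline as written. Wilson's existence theorem guarantees only that $B(K)$ contains all \emph{sufficiently large} admissible $v$; it provides no explicit threshold $v_0$ (the paper itself makes exactly this remark about the Lamken--Wilson theorem in its introduction: no bound on the asymptotics is given). Without an explicit $v_0$ you cannot reduce to a finite interval, cannot run a finite search below it, and therefore cannot conclude that the exceptional set ``is precisely the $22$ listed candidates'' --- your argument proves at best ``all but finitely many unspecified values,'' a strictly weaker statement in which the entire content of the theorem (the exact list) is assumed rather than derived. Mullin and Stinson avoid this by never invoking the asymptotic theorem: their proof is fully constructive, covering \emph{all} large $v$ by explicit TD-based recursions with concrete ingredient designs, which is what produces a definite finite exception list (originally larger than $22$; Greig \cite{Gr} subsequently constructed PBDs for several of those orders to reach the list quoted here). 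Secondly, the nonexistence for $v=55$ is only gestured at: ruling out a $2$-$(55,7,1)$ design via the non-integrality of $b=55\cdot 9/7$ is correct but nowhere near enough, since a $(55,Q_{1(6)},1)$-PBD could a priori use blocks of sizes $13,19,25,31,37,43,49$ as well; the actual counting analysis of how such longer blocks interact with the nine blocks through an outside point is the nontrivial part of the nonexistence proof, and your proposal acknowledges rather than supplies it. In short: correct necessary condition, right family of techniques, but the two steps that constitute the theorem --- the exact exception list and the definiteness of $55$ --- are both left unestablished.
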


\begin{remark}
In light of Theorem \ref{FK(q)}, we have existence of a regular FK$(q)$ for all $q\in Q_{1(6)}\setminus\{13\}$;
the existence of a (necessarily not regular) FK$(13)$ is still open, but if we could construct such a Kaleidoscope, then we would immediately have  the existence of Fano Kaleidoscopes for all admissible values of $v$ with the possible exception of the 22 values listed in Theorem \ref{MS}.
Note that the underlying structure of a  $FK(13)$ is a  2-$(13,7,7)$ design, the block complement of a 2-$(13,6,5)$ design;  these 	 have been enumerated by Kaski and \"Osterg{\aa}rd in \cite{KO} - there are  $19,072,802$ designs with these parameters. 
\end{remark}


In any case, there are results of \cite{MS} giving a $(v,K,1)$-PBD with $v\equiv1$ (mod 6)
which do not require that $13\in K$. These results can already be applied to constructing FKs. 
For instance from Lemma 3.4 and 3.5 of \cite{MS} we have the existence of a 
FK$(v)$ for $v\in \{187, 385,1537\}$; from Corollary 2.6 of \cite{MS} we obtain the following more general result.

\begin{corollary} 
%
Let $m$ be an integer whose maximal prime-power factors are not smaller than $43$.
If there exists a FK$(m)$ and a FK$(m+6t)$ with $0\leq t\leq m$, then there exists a FK$(43m+6t)$.
\end{corollary}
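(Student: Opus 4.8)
The plan is to reduce the construction of a FK$(43m+6t)$ to the PBD-based Proposition of Section~\ref{recur}: if there is a $(v,K,1)$-PBD together with a FK$(k)$ for every $k\in K$, then there is a FK$(v)$. Thus it suffices to exhibit a $(43m+6t,K,1)$-PBD all of whose block sizes admit Fano Kaleidoscopes, and this is precisely what Corollary~2.6 of \cite{MS} is meant to supply. First I would record the ingredients: since $43$ is a prime with $43\equiv1\pmod 6$ and $43\ne 13$, Theorem~\ref{FK(q)} gives a regular FK$(43)$; the hypotheses of the corollary furnish FK$(m)$ and FK$(m+6t)$ directly. The necessary congruence is automatic, since $43m+6t\equiv m\equiv1\pmod 6$, the last step because the existence of FK$(m)$ forces $m\equiv1\pmod 6$.

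The heart of the argument is the PBD itself. Corollary~2.6 of \cite{MS} produces, for $0\le t\le m$, a pairwise balanced design on $43m+6t$ points whose blocks have sizes in $\{43,\,m,\,m+6t\}$ (together, possibly, with further prime-power sizes lying in $Q_{1(6)}\setminus\{13\}$). Its backbone is a transversal design $TD(43,m)$ with $43$ groups of size $m$, into which a set of $6t$ adjoined points is absorbed while the groups are filled. What makes this applicable here is exactly the hypothesis that the maximal prime-power factors of $m$ are not smaller than $43$: by MacNeish's theorem the number of mutually orthogonal Latin squares of order $m$ is at least $\min_i(p_i^{a_i})-1\ge 42$, which already exceeds the $41$ squares needed for a $TD(43,m)$ to exist. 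Hence the construction of \cite{MS} goes through and yields the PBD.

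Finally I would assemble the pieces. Every block size occurring in this PBD is the order of an existing Fano Kaleidoscope: size $43$ by Theorem~\ref{FK(q)}, sizes $m$ and $m+6t$ by hypothesis, and any auxiliary prime-power size in $Q_{1(6)}\setminus\{13\}$ again by Theorem~\ref{FK(q)} (or, for composite auxiliary sizes, by Theorem~\ref{recursive}). Placing a Fano Kaleidoscope on the points of each block, exactly as in the proof of the PBD Proposition, glues the blockwise Kaleidoscopes into a single FK$(43m+6t)$.

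I expect the only genuine obstacle to be the bookkeeping at the interface with \cite{MS}: confirming that the block sizes produced by their Corollary~2.6 are precisely those for which a Fano Kaleidoscope has been established, and that the MOLS hypothesis (equivalently, the existence of the relevant transversal design) of that corollary is implied by the stated condition on the prime-power factors of $m$. Once these two checks are in place, the existence of FK$(43m+6t)$ follows with no further computation.
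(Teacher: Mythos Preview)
Your proposal is correct and matches the paper's approach: the paper gives no explicit proof at all, merely stating that the result follows from Corollary~2.6 of \cite{MS}, and your argument correctly unpacks this by combining the PBD produced there with the PBD-closure Proposition of Section~\ref{recur} and Theorem~\ref{FK(q)} (and your MacNeish check that the hypothesis on the prime-power factors of $m$ guarantees the required TD$(43,m)$ is exactly the point of that hypothesis).
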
 
For instance, when taking $m=43$ we obtain the existence of a FK$(v)$ for the following new values of $v$:
\small
$$1885, 1903, 1909, 1915, 1927, 1945, 1963, 1969, 1975, 2005, 2035, 2047, 2065, 2095.$$
\normalsize
These are all $v$'s of the form $43\cdot43+6t$ not belonging to $Q_{1(6)}$ with $t\leq43$ and $43+6t\in Q_{1(6)}$.

\section{Hesse Kaleidoscopes}\label{hesse}

It is well-known that, up to isomorphism, there is only one 2-$(9,3,1)$-design. This is the affine plane on the field of order 3 usually denoted by AG$(2,3)$.
In the following any such design will be called a {\it Hesse plane}\footnote{This terminology is suggested 
by the fact that the configuration of points and lines of AG$(2,3)$ is sometimes called the {\it Hesse configuration} (see, e.g., \cite{cox}).}.
It is natural to generalize the notion of a Fano Kaleidoscope to that of a {\it Hesse Kaleidoscope} as follows.

\begin{definition}\label{defHK} Let ${\cal H}$ be a set of Hesse planes such that
 the twelve lines of each plane are colored with twelve different colors
 $c_0,c_1, \dots, c_{11}$, and such that the points of the planes of ${\cal H}$ belong to a
 given $v$-set $\cal V$.  We say that $\cal H$ is a {\it Hesse-Kaleidoscope} of order $v$ (briefly HK$(v)$) if for any
two distinct points $x$, $y$ of $\cal V$ and any color $c_i\in\{c_0,c_1,\dots,c_{11}\}$ there is exactly one Hesse plane of ${\cal H}$ 
whose $c_i$-colored line contains $x$ and $y$.  
\end{definition}

The underlying structure of a HK$(v)$ is clearly a 2-$(v,9,12)$ design, and therefore  the admissibility conditions  
give $v\equiv  1$ or 3 (mod 6) as a necessary condition for the existence of a HK$(v)$.
Using the {\it $1$-rotational representation} of AG$(2,3)$, it is convenient to represent each Hesse plane $B$ of a
HK$(v)$ with point-set $\cal V$ as an ordered 9-ple of distinct elements of $\cal V$
$${B}=(b_\infty,b_0,b_1,b_2,b_3,b_4,b_5,b_6,b_7)$$
labelled so that the lines of $B$ are given by the 12-ple $${\cal L}(B)=(\ell_0(B),\ell_1(B),\dots,\ell_{11}(B))$$
where
$$\ell_i(B)=\begin{cases}\{b_i,b_{i+1},b_{i+3}\} \quad\,\mbox{ for $0\leq i\leq 7$;}\cr
\{b_\infty,b_i,b_{i+4}\} \quad\quad\mbox{for $8\leq i\leq 11$}\end{cases}$$
and where the indices distinct from $\infty$ have to be understood modulo 8.

\begin{figure}[htb]
\begin{tikzpicture}[line cap=round,line join=round,>=triangle
45,x=1.0cm,y=1.0cm]
\clip(-2.7,0.14) rectangle (10.06,5.2);
\draw (2,4)-- (3,4);
\draw (3,4)-- (4,4);
\draw (2,3)-- (3,3);
\draw (3,3)-- (4,3);
\draw (2,2)-- (4,2);
\draw (2,4)-- (2,2);
\draw (3,4)-- (3,2);
\draw (4,4)-- (4,2);
\draw (2,2)-- (4,4);
\draw (2,4)-- (4,2);
\draw (4,3)-- (3,4);
\draw (2,3)-- (3,2);
\draw (3,2)-- (4,3);
\draw (2,3)-- (3,4);
\draw (4,2)-- (3,1);
\draw [shift={(2.5,2)}] plot[domain=2.03:5.18,variable=\t]({1*1.12*cos(\t
r)+0*1.12*sin(\t r)},{0*1.12*cos(\t r)+1*1.12*sin(\t r)});
\draw (4,4)-- (5,3);
\draw [shift={(4,2.5)}] plot[domain=-2.68:0.46,variable=\t]({1*1.12*cos(\t
r)+0*1.12*sin(\t r)},{0*1.12*cos(\t r)+1*1.12*sin(\t r)});
\draw (2,4)-- (3,5);
\draw [shift={(3.5,4)}] plot[domain=-1.11:2.03,variable=\t]({1*1.12*cos(\t
r)+0*1.12*sin(\t r)},{0*1.12*cos(\t r)+1*1.12*sin(\t r)});
\draw (2,2)-- (1,3);
\draw [shift={(2,3.5)}] plot[domain=0.46:3.61,variable=\t]({1*1.12*cos(\t
r)+0*1.12*sin(\t r)},{0*1.12*cos(\t r)+1*1.12*sin(\t r)});
\begin{scriptsize}
\fill [color=black] (2,2) circle (1.5pt);
\draw[color=black] (2.16,1.80) node {$b_\infty$};
\fill [color=black] (3,2) circle (1.5pt);
\draw[color=black] (3.04,1.80) node {$b_0$};
\fill [color=black] (4,2) circle (1.5pt);
\draw[color=black] (4.16,1.80) node {$b_4$};
\fill [color=black] (2,3) circle (1.5pt);
\draw[color=black] (1.74,3.24) node {$b_2$};
\fill [color=black] (3,3) circle (1.5pt);
\draw[color=black] (3.14,3.28) node {$b_3$};
\fill [color=black] (4,3) circle (1.5pt);
\draw[color=black] (4.14,3.28) node {$b_5$};
\fill [color=black] (2,4) circle (1.5pt);
\draw[color=black] (1.74,4.16) node {$b_6$};
\fill [color=black] (3,4) circle (1.5pt);
\draw[color=black] (3.16,4.26) node {$b_1$};
\fill [color=black] (4,4) circle (1.5pt);
\draw[color=black] (4.1,4.26) node {$b_7$};
\end{scriptsize}
\end{tikzpicture} 
\end{figure}

Reasoning as in Remark \ref{repeat} we can say that a $HK(v)$ exists whenever
a 2-$(v,9,1)$ design exists, namely for all $v\equiv 1$ or 9 (mod 72) with the 87 exceptions in Tables 3.11, 3.12 in \cite{AG}. 

A {\it regular} HK$(v)$ with $v\equiv1$ (mod 6) can be built from a \emph{Hesse Kaleidoscopic Difference Family} of order $v$, briefly HKDF$(v)$.
This is a $(v,9,12)$-DF in some group whose blocks are ordered 
in such a way that the set of the $i$-th lines of all its blocks is a $(v,3,1)$-DF for $0\leq i\leq 11$. 

As an example, consider the 9-tuple $B=(0, 1, 2, 3, 7, 16, 8, 4, 10)$ of elements of $\Z_{19}$. Then one can check that
${\cal F}:=\{B,7B,11B\}$ is a HKDF$(19)$.

Note that we can use composition techniques similar to the ones used in the Fano case, since the following result is easily proved.
\begin{theorem}\label{composingHKDF}
If we have a  HKDF$(G)$, a HKDF$(H)$, and a $(H,9,1)$-DM, then there exists a FKDF$(G\times H)$.
\end{theorem}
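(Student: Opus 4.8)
The plan is to follow verbatim the strategy of Theorem \ref{composingFKDF}, replacing the role played there by Jungnickel's product construction for $7$-element blocks with the analogous one for $9$-element Hesse blocks. Denote the three ingredients by ${\cal H}$, ${\cal H}'$ and $M$. Recall that ${\cal H}$ (resp. ${\cal H}'$) is a $(G,9,12)$-DF (resp. $(H,9,12)$-DF) whose blocks, written as ordered $9$-tuples $(b_\infty,b_0,\dots,b_7)$, are arranged so that for each $i$ with $0\le i\le 11$ the set ${\cal H}_i$ (resp. ${\cal H}'_i$) of all $i$-th lines is a $(G,3,1)$-DF (resp. $(H,3,1)$-DF). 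I would first apply the construction of Theorem \ref{dieter} to ${\cal H}$, ${\cal H}'$ and $M$ to obtain a $(G\times H,9,12)$-DF ${\cal H}\circ_M{\cal H}'$; the only thing that then remains is to check that, with the inherited ordering, it is in fact a HKDF, i.e. that each of its twelve line-families is a $(G\times H,3,1)$-DF.

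The key step is the correct bookkeeping of the rows of the $9\times|H|$ difference matrix $M$. I would index the nine rows of $M$ by the same labels $\infty,0,1,\dots,7$ that index the nine points of a Hesse block, so that the $r$-th coordinate of $(b_\infty,b_0,\dots,b_7)$ is paired with the row labelled $r$ in the product construction. For each $i$ with $0\le i\le 11$, let $M_i$ be the $3\times|H|$ submatrix of $M$ formed by the three rows whose labels are the indices appearing in the line $\ell_i(B)$: explicitly, for $0\le i\le 7$ these are the rows $i,i+1,i+3$ (mod $8$), and for $8\le i\le 11$ they are the rows $\infty,\,i,\,i+4$ (reading the non-$\infty$ indices mod $8$). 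Since any three distinct rows of a $(H,9,1)$-DM again form a $(H,3,1)$-DM, each $M_i$ is a $(H,3,1)$-DM.

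Then, exactly as in the Fano case, the construction of Theorem \ref{dieter} is compatible with passing to lines: one verifies that for each $i$ the set of all $i$-th lines of ${\cal H}\circ_M{\cal H}'$ coincides with ${\cal H}_i\circ_{M_i}{\cal H}'_i$. Since ${\cal H}_i$ is a $(G,3,1)$-DF, ${\cal H}'_i$ is a $(H,3,1)$-DF and $M_i$ is a $(H,3,1)$-DM, Theorem \ref{dieter} shows this is a $(G\times H,3,1)$-DF. As this holds for all $0\le i\le 11$, the family ${\cal H}\circ_M{\cal H}'$ is a HKDF$(G\times H)$, which is the intended conclusion (the statement above should read HKDF, not FKDF).

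I do not expect any genuine difficulty here: the product construction and the observation that any $k'$ rows of a DM again form a DM are identical to the Fano setting. The only point demanding care, and hence the main if modest obstacle, is the indexing convention forced by the distinguished point $b_\infty$ and by the two distinct shapes of Hesse lines ($\{b_i,b_{i+1},b_{i+3}\}$ versus $\{b_\infty,b_i,b_{i+4}\}$): I must be sure that the triple of rows selected to form $M_i$ matches precisely the triple of point-labels occurring in $\ell_i(B)$, so that the $i$-th line-family of the product really is ${\cal H}_i\circ_{M_i}{\cal H}'_i$.
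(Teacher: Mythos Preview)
Your proposal is correct and is exactly the argument the paper intends: the theorem is stated without proof, with the remark that it ``is easily proved'', and the surrounding context makes clear that one is meant to transplant the proof of Theorem~\ref{composingFKDF} verbatim to the Hesse setting. You have done precisely that, including the careful row-indexing by $\infty,0,1,\dots,7$ to accommodate the two line-types, and you correctly note the typo (the conclusion should read HKDF$(G\times H)$).
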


 The following theorem is the analogous result to Theorem \ref{FK(q)}, and it is proved using the same ideas and methods. Let us point out though that we now will need to settle by computer the existence of a HK$(v)$ for the admissible prime power values of $v$ in the far bigger interval $[7,Q(8)]$, with $Q(8)=808,659,000$.

\begin{theorem}\label{HK(q)} A regular  {\rm HK}$(q)$ exists for any prime or prime power
$q\equiv1$ $($mod $6)$,  $q \ne13$.
\end{theorem}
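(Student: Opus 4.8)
The plan is to mirror exactly the structure of the proof of Theorem~\ref{FK(q)}, since the two statements are perfectly parallel: replace the Fano plane (a $2$-$(7,3,1)$ design with $7$ lines) by the Hesse plane $AG(2,3)$ (a $2$-$(9,3,1)$ design with $12$ lines), and replace the notion of FKDF by that of HKDF. As in the Fano case, it suffices by Proposition~\ref{FKDF} (in its HKDF analogue) to produce, for each admissible prime power $q\equiv1\pmod 6$ with $q\neq 13$, an \emph{initial block}, i.e.\ an ordered $9$-tuple $B=(b_\infty,b_0,\dots,b_7)$ whose twelve lines $\ell_i(B)$ all satisfy the hypothesis of Wilson's Lemma~\ref{wilson}: each $\Delta\ell_i$ must equal $\{1,-1\}\cdot T_i$ with $T_i$ containing exactly one element in each cyclotomic class of index $3$. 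Given such a block, developing it over a transversal $S$ of $\{1,-1\}$ in $C^3$ yields an HKDF$(q)$, hence a regular HK$(q)$.

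First I would split into the large and small cases exactly as before. For $q$ large, I would use Lemma~\ref{BP}: one chooses the block in the symmetric shape $B=(0,\,b_0,\,1,\,-1,\,\overline x,\,-\overline x,\,\overline y,\,-\overline y,\,\overline z,\,-\overline z)$ (or a similar parametrized form), compute the twelve difference sets $T_i$, and read off the finitely many cyclotomic conditions the parameters $\overline x,\overline y,\overline z,\dots$ must satisfy. One then invokes Lemma~\ref{BP} in stages, fixing one free element at a time, so that at each stage the relevant system $X$ (respectively $Y$, $Z$) of simultaneous cyclotomic conditions is nonempty as soon as $q$ exceeds the corresponding bound $Q(t)$. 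Because the Hesse plane has $12$ lines rather than $7$, and one extra point $b_\infty$, the number of free parameters and hence the value of $t$ needed will be larger; this is precisely why the relevant threshold rises to $Q(8)=808{,}659{,}000$ rather than $Q(5)$. As in the Fano proof one would likely split further according to whether $2$ is a cube in $\F_q$.

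For $q<Q(8)$ the argument is a finite computer search, again subdivided by the arithmetic type of $q$: $q$ prime; $q=p^n$ with $p\equiv1\pmod 6$ handled by lifting the prime case via the composition Theorem~\ref{composingHKDF} together with the existence of $(\F_q,9,1)$-difference matrices (Remark~\ref{(q,7,1)-DM}, valid for all prime powers $\ge 9$); $q=p^{2n}$ with $p\equiv5$ or $11\pmod{12}$, realized inside $\F_{p^2}$; and the exceptional thread $q=13^n$ with $n\ge 2$, where one exhibits explicit initial blocks for $13^2$ and $13^3$ and combines them for larger exponents. For the prime and small--prime-power cases one searches for blocks of a convenient shape (analogous to $B(x)=(0,1,2,x,x+1,x^2+x,2x)$ or $(1,x,\dots,x^6)$) so that the twelve cyclotomic conditions collapse to checking only a few lines; a sporadic list of ``bad'' primes, like the Fano list $L$, would be cleared by ad hoc blocks.

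The main obstacle I anticipate is twofold. Computationally, the search range is vastly larger: one must verify the existence of an initial block for \emph{all} admissible prime powers up to $Q(8)\approx 8\times10^8$, not up to $Q(5)\approx 3\times10^5$, so the feasibility of the exhaustive search—and the choice of a block shape whose cyclotomic conditions reduce to as few independent constraints as possible—is the crux. Structurally, the Hesse plane carries $12$ lines, so a generic symmetric block gives rise to $12$ difference triples $T_i$, and one must arrange the ordering of the nine points so that these triples' conditions remain simultaneously satisfiable; checking that a symmetric ansatz still forces each $\Delta\ell_i$ into the required $\{1,-1\}\cdot T_i$ form with $T_i$ a transversal of the cyclotomic classes is more delicate than in the seven-line case, and verifying that the large-$q$ staged application of Lemma~\ref{BP} really needs only $t\le 8$ free parameters is exactly what pins the threshold at $Q(8)$.
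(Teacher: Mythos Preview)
Your overall strategy coincides with the paper's: exhibit an initial block whose twelve lines all satisfy Lemma~\ref{wilson}, handle $q>Q(8)$ via Lemma~\ref{BP}, handle $q\le Q(8)$ by a computer search on a convenient block shape, and lift to nonprime prime powers through Theorem~\ref{composingHKDF} and Remark~\ref{(q,7,1)-DM}. The identification of $Q(8)$ as the relevant threshold is correct.

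There is, however, one concrete pitfall in your large-$q$ ansatz. The tuple you write down has ten entries rather than nine; more seriously, the natural nine-point symmetric form $b_\infty=0$, $b_{i+4}=-b_i$ cannot work here. Each of the four ``infinity'' lines $\ell_{8+i}=\{b_\infty,b_i,b_{i+4}\}=\{0,b_i,-b_i\}$ then has $\Delta\ell_{8+i}=\{1,-1\}\cdot\{b_i,b_i,2b_i\}$ with a repeated entry, so the triple $T_i$ can never meet all three cyclotomic classes. The paper avoids this by dropping the symmetry in the Hesse case: it fixes $(b_\infty,b_0,b_1,b_2)=(0,1,2,3)$ and then chooses $b_3,b_4,b_5,b_6,b_7$ \emph{sequentially}, each $b_k$ from an explicit set $X_k$ defined by at most eight cyclotomic conditions; this is precisely what makes $Q(8)$ the right bound. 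For the prime search the paper's convenient shape is $B(x)=(0,1,x,x^2,\dots,x^7)$, which reduces the check to four lines $\{1,x,x^3\}$, $\{x^5,x^6,1\}$, $\{x^7,1,x^2\}$, $\{0,1,x^4\}$.

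So your plan is sound, but the symmetric block you propose for the asymptotic part would have to be replaced by the paper's sequential (non-symmetric) construction, or by some other ansatz that does not force the infinity lines to degenerate.
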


\begin{proof}
We will sketch the proof, which can be obtained along the lines of Theorem \ref{FK(q)}. It is enough to prove that
for  each prime power $q$ as in the statement there is a HKDF$(q)$ generated by an initial block
$B=(b_\infty,b_0,b_1,\dots,b_7)$ with the property that each line of $B$ satisfies the condition of Lemma \ref{wilson}.

Its existence can be proved using Lemma \ref{BP} if $q$ is greater than $Q(8)$; a computer search dealt with the cases $q\in [19, Q(8)]$.
When $q>Q(8)$ we can build the required  $B=(b_\infty,b_0,b_1,\dots,b_7)$ taking $(b_\infty,b_0,b_1,b_2)=(0,1,2,3)$
and each of the subsequent elements $b_3,\dots, b_7$ obtained, consecutively, by choosing 
$b_k$ in an appropriate set $X_k$ as indicated below, where $C^3_i$ and $C^3_j$ denote the cyclotomic 
classes containing 2 and 3, respectively.

\scriptsize
$$X_3=\{x\in \F_q \ : \{x,x-3\}\subset C^3_0; x-1\in C^3_1; x-2\in C^3_2\}$$
$$X_4=\{x\in \F_q : x-b_3\in C^3_0; \{x,x-2\}\subset C^3_1;
\{x-1, x-3\}\subset C^3_2\}.$$
$$X_5=\{x\in \F_q : x\in C^3_{i+1}; x-2\in C^3_{i+2}; x-b_4\in C^3_0; \{x-1,x-3\}\subset C^3_1; x-b_3\in C^3_2\}.$$
$$X_6=\{x\in \F_q : x\in C^3_{j+1}; x-3\in C^3_{j+2}; x-b_5\in C^3_0; \{x-2,x-b_3\}\in C^3_1; \{x-1,x-b_4\}\in C^3_2\}.$$
$$X_7=\{x\in \F_q : x-b_6\in C^3_0; \{x,x-b_4\}\subset C^3_1; \{x-2,x-b_3,x-b_5\}\subset C^3_2;x-1\in C^3_{i+1}; x-3\in C^3_{i+2}\}.$$

\normalsize

As mentioned above, the cases $v\in [19, Q(8)]$ have been solved by computer search. 
As in the proof of Theorem \ref{FK(q)}, one first considers the case $v=p$ a prime. For prime orders,
we have found an initial block of the form $B(x)=(0,1,x,x^2,x^3,x^4,x^5,x^6,x^7)$ for a suitable $x\in \Z_p$ for almost all $p$ in the required interval.
This approach is convenient since in order to check whether such a block $B(x)$ is good, it is enough to
check the differences of only four of its lines, namely $\{1,x,x^3\}$, $\{x^5,x^6,1\}$, $\{x^7,1,x^2\}$ and $\{0,1,x^4\}$.
We list here the first cases where this approach has been successful.

\begin{center}
\begin{tabular}{|c|c|c|c|c|c|c|c|c|}
\hline
$p$  & 97 & 103 & 139 & 163 & 181 & 223 & 229 & 277\\
\hline
$x$ & 14 & 36 & 61 & 143 & 66 & 187 & 184 & 97 \\
 \hline
\end{tabular}
\end{center}

The only twenty-six primes $p\in [19, Q(8)]$ for which we did not find an initial block $B(x)$ as described above are the following:
13, 31, 37, 43, 61, 67, 73, 79, 109, 127, 151, 157, 193, 199, 211, 241, 271,
283, 337, 349, 367, 463, 733, 751, 811, 937.
We found an initial block for these exceptional primes anyway, apart from the case $p=13$.
The following table presents the initial block for $p<100$. 

\begin{center}
\begin{tabular}{|c|c|}
\hline
$p$  & $B$\\
\hline
31 & $(12, 0, 1, 3, 6, 13, 8, 28, 11)$ \\
37& $(24, 0, 1, 7, 3, 35, 29, 25, 17)$\\
43& $(13, 0, 1, 3, 7, 8, 22, 17, 14)$\\
61& $(50, 0, 1, 6, 5, 15, 10, 13, 14)$\\
67& $(26, 0, 1, 6, 7, 18, 13, 12, 11)$\\
73& $(3, 0, 1, 4, 6, 29, 27, 16, 17)$\\ 
79& $(16, 0, 1, 4, 20, 12, 25, 7, 17)$\\ 
 \hline
\end{tabular}
\end{center}

An exhaustive search has ruled out the existence of a HKDF$(13)$, while 
an example of a HKDF$(19)$ has been already given.

Then, once more as in the proof of Theorem \ref{FK(q)}, we consider the case $v=q$ a nonprime prime power; we may then use the composition constructions  of Theorem \ref{composingHKDF}  to limit the search space. Having established the existence of a HK$(p)$ for $p$ prime, we can then immediately
deduce the existence of a HKDF$(p^n)$ for $p$ prime, $13\neq p\equiv1$ (mod 6); that leaves us, similarly as in the proof of Theorem \ref{FK(q)}, with the task of finding a HKDF$(p^{2})$ for $p\equiv 5,11 \pmod{12}$, for $p\in[5,\sqrt{Q(8)}]$, and a HKDF$(13^{n})$ for $n=2,3$.
For all values of $q$ as above, we are able to find an initial block of the form $B(x)=(0,1,x,x^2,x^3,x^4,x^5,x^6,x^7)$ for a suitable $x \in \F_q$ 
except for $q=p^2$ with $p\in\{5,11,13,17,23,29\}$.
For these remaining values of $q$ we found an alternative initial block $B$ as follows
\small
\begin{center}
\bgroup
\def\arraystretch{1.3}
\begin{tabular}{|c|c|}
\hline
$q$  & $B$\\
\hline
$5^2$ & $(1+3\sqrt{3},\ 0, \ 1, \ 2, \ 2+\sqrt{3}, \ t, \ 2\sqrt{3}, \ 4+3\sqrt{3}, \ 1+4\sqrt{3})$\\
\hline
$11^2$ & $(2\sqrt{-1}, \ 0, \  1, \  2, \  3+\sqrt{-1}, \ 4+\sqrt{-1}, \ 1 + 4\sqrt{-1}, \ 3+3\sqrt{-1}, \ 1+2\sqrt{-1})$\\
\hline
$13^2$ & $(2+12\sqrt{2}, \ 0, \ 1, \ 2, \ 3+\sqrt{2}, \ 4+\sqrt{2}, \ 2+8\sqrt{2}, \ 9+3\sqrt{2}, \ 3+4\sqrt{2})$\\
\hline
$17^2$ & $(5+3\sqrt{3}, 0, 1, 2, 3+\sqrt{3}, \ 4+2\sqrt{3}, \ 1+\sqrt{3}, \ 2\sqrt{3}, \ 1+3\sqrt{3})$\\
\hline
$23^2$ & $(1+11\sqrt{-1},\  0, \ 1, \ 2, \ 3+\sqrt{-1}, \ 4+\sqrt{-1}, \ 2\sqrt{-1}, \ 1+3\sqrt{-1}, \ 1+2\sqrt{-1})$\\ 
\hline
$29^2$ & $(4\sqrt{3},\ 0, \ 1, \ 2, \ 2+\sqrt{3}, \ 4+2\sqrt{3}, \ 4+3\sqrt{3}, \ 3+\sqrt{3}, \ 1+4\sqrt{3})$\\ 
 \hline
\end{tabular}
\egroup
\end{center}
\normalsize
and this completes the proof. 
\end{proof}

Note that, while in Section \ref{recur} we managed to extend existence in the Fano case to many non prime power orders 
using results on pairwise balanced designs, this approach will not work in this case, since
to apply Theorem \ref{MS} we lack a $HK(13)$ and, above all, a $HK(7)$. 
As for Fano Kaleidoscopes, the methods described above do not give us a $HK(13)$, but we know once more  that such a design might exist if we drop the requirement for it to be regular; on the other hand, obviously, we cannot have a   $HK(7)$, so there is no hope of having an almost complete existence  for orders $v\equiv 1\pmod{6}$ based on  using Theorem \ref{MS}.

\medskip
We have no non-trivial examples of a HK$(v)$
with $v\equiv 3\pmod{6}$.  We may note, though,  that such a  HK$(v)$ can never be cyclic.

\begin{proposition}
A cyclic HK$(6n+3)$ never exists. 
\end{proposition}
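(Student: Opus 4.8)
The plan is to derive a contradiction from the counting/difference structure forced on a cyclic $HK(6n+3)$ by examining what happens at the unique involution of the group $\Z_{6n+3}$. Wait---$\Z_{6n+3}$ has odd order, so it has no involution. This is precisely the feature I want to exploit, but let me reconsider: the obstruction should instead come from the element structure of the blocks themselves. A cyclic $HK(6n+3)$ is generated by a $HKDF(6n+3)$, i.e. a $(v,9,12)$-difference family in $\Z_v$ whose $i$-th lines form $(v,3,1)$-difference families for each of the twelve colors. The underlying design is a $2$-$(v,9,12)$ design, so the number of base blocks is $t = 12\cdot v(v-1)/(9\cdot 8 \cdot v) = v\cdot 11/6$; more usefully, the replication count and the requirement that each color-class be a perfect $(v,3,1)$-DF will force a parity or fixed-point condition that cannot be met when $3\mid v$.

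First I would set $v=6n+3$ and note $3\mid v$, so $\Z_v$ has a unique subgroup of order $3$, namely $H=\{0,\,v/3,\,2v/3\}$, and the two nonzero elements of $H$ are the only elements $d$ with $3d=0$. The key step is to analyze, for each color $i$, the $(v,3,1)$-difference family ${\cal F}_i$ formed by the $i$-th lines: its blocks are $3$-subsets $\{a,b,c\}$ of $\Z_v$ whose six differences $\pm(a-b),\pm(b-c),\pm(a-c)$ exhaust $\Z_v\setminus\{0\}$ exactly once. I would count how many times the two special differences $v/3$ and $2v/3$ must be covered, and examine which base triples can produce them. A triple $\{a,b,c\}$ contributes a difference in $H\setminus\{0\}$ exactly when two of its elements differ by $v/3$; because $v/3$ and $2v/3$ are inverse to each other, a single such "arithmetic-progression-in-$H$" triple of the form $\{a,\,a+v/3,\,a+2v/3\}$ (a coset of $H$) would contribute $v/3$ and $2v/3$ \emph{each three times}, which overshoots $\lambda=1$; any other configuration contributes them with the wrong multiplicity. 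Thus the covering of $H\setminus\{0\}$ with multiplicity one is already impossible inside a single $(v,3,1)$-DF when $3\mid v$.

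The crux, then, is the standard fact that a $(v,3,1)$-difference family in $\Z_v$ cannot exist when $v\equiv 3\pmod 6$: each base triple $\{a,b,c\}$ has difference-list $\Delta=\pm\{a-b,b-c,a-c\}$, and these three unordered differences sum (with appropriate signs) to zero, forcing a congruence obstruction modulo $3$ on the elements of $H$. Concretely, the number of base triples is $(v-1)/6$, which is an \emph{integer only when} $v\equiv 1\pmod 6$; for $v\equiv 3\pmod 6$ we get $(v-1)/6\notin\Z$, so no $(v,3,1)$-DF in a group of order $v$ can exist at all. Since Definition of an $HKDF(v)$ requires each of the twelve color-classes ${\cal F}_i$ to be a $(v,3,1)$-DF, and a cyclic $HK(6n+3)$ would (by the regular analogue of Proposition \ref{FKDF}) yield such a difference family, we obtain the contradiction.

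I would therefore structure the proof as: (1) assume a cyclic $HK(v)$ with $v=6n+3$ exists and pass to its $HKDF(v)$; (2) extract any single color-class ${\cal F}_i$, a $(v,3,1)$-DF in $\Z_v$; (3) count: a $(v,3,1)$-DF needs $(v-1)/6$ base blocks, which is not an integer when $v\equiv 3\pmod 6$, the desired contradiction. The main obstacle is making step (3) airtight: I must confirm that a cyclic $HK(v)$ genuinely forces a \emph{perfect} $(v,3,1)$-DF in the full group $\Z_v$ (not merely a partial or relative one), i.e. that cyclicity of the kaleidoscope transfers to the sharp-transitivity hypothesis of Proposition \ref{FKDF} and hence that each color-class is developed over all of $\Z_v$. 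Once that transfer is justified, the non-integrality of $(v-1)/6$ closes the argument cleanly, and no case analysis on $n$ is needed.
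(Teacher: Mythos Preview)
Your argument has a genuine gap at step (1): you assert that a cyclic HK$(v)$ ``passes to'' a HKDF$(v)$, so that each color-class ${\cal F}_j$ is a $(v,3,1)$-DF in $\Z_v$. This would be the \emph{converse} of the HK analogue of Proposition~\ref{FKDF}, and it is not available. A cyclic design can have short block-orbits, and in fact for $v=6n+3$ a cyclic Steiner triple system STS$(v)$ \emph{does} exist: it necessarily contains the $2n+1$ cosets of the order-$3$ subgroup $H=\{0,2n+1,4n+2\}$ as a short orbit, together with $n$ full orbits. So the color-class design attached to each $c_j$ is a perfectly legitimate cyclic STS$(6n+3)$, just not one generated by a $(v,3,1)$-DF. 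Your non-integrality observation $(v-1)/6\notin\Z$ correctly rules out $(v,3,1)$-DFs, but it does \emph{not} rule out cyclic STS$(6n+3)$'s, and hence does not by itself rule out a cyclic HK$(6n+3)$. You flag exactly this point at the end (``I must confirm that a cyclic HK$(v)$ genuinely forces a perfect $(v,3,1)$-DF''), but this is not a technicality to be confirmed --- it is the entire content of the proof, and your proposal does not supply it.

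The paper's argument is precisely the short-orbit analysis you are missing. Each color-class is a cyclic STS$(6n+3)$, so for every color $c_j$ some Hesse plane $B$ has its $c_j$-line equal to a coset of $H$. Translating $B$ by elements of $H$ fixes that $c_j$-line, and the uniqueness clause in the HK definition then forces $B+h=B$ for all $h\in H$; hence every line of $B$ is $H$-invariant, i.e.\ a coset of $H$. But distinct cosets of $H$ are disjoint, while a Hesse plane has intersecting lines --- contradiction. Note that once this short-orbit analysis is done the contradiction is already in hand, so the counting step you planned becomes superfluous.
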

\begin{proof}
Among the blocks of a cyclic 2-$(6n+3,3,1)$ design we have, in particular, all cosets of $\{0,2n+1,4n+2\}$ in $\Z_{6n+3}$.
These blocks are said to be {\it short} because their orbit under $\Z_{6n+3}$ have short length $2n+1$ while all other blocks
belong to a {\it full} orbit of length $6n+3$.
Let $B$ be a block of a putative cyclic HK$(6n+3)$. If $B$ has a short line, then it is clear that all the other lines of $B$ are also short.
So all lines of $B$ should be cosets of $\{0,2n+1,4n+2\}$ in $\Z_{6n+3}$. This implies that 
any two distinct lines of $B$ are disjoint which is clearly absurd. Thus all lines of any block of our HK$(6n+3)$ should be full
which is absurd anyway. Indeed this would imply that the twelve 2-$(6n+3,3,1)$ designs associated with our HK$(6n+3)$ have no short blocks.
\end{proof}

\section{Generalizations and conclusions}\label{gen}
Fano Kaleidoscopes and Hesse Kaleidoscopes may be thought of as 
the smallest instances of the following general idea:
consider a set $\cal D$ of 2-$(k,h,1)$ designs whose vertices belong to 
a given $v$-set $\cal V$; in each design let the $b$ blocks be colored with the same $b$ different colors $c_0,c_1,\dots, c_{b-1}$.
We say that $\cal D$ is a {\it Kaleidoscope Design} of order $v$ and type $(k,h,1)$, briefly a $(k,h,1)K(v)$, if, as for FKs and HKs, 
for any two distinct points $x$, $y$ of $\cal V$ and any color $c_i$ there is exactly one design of ${\cal D}$ in which the block having color
$c_i$ contains $x$ and $y$. 
The underlying structure is a ``big'' 2-$(v,k,b)$ design, so the usual admissibility conditions for the existence of designs apply. 
Many of the ideas used in building FKs and HKs can be used in theory to build Kaleidoscope Designs; in the general case, though,  the situation becomes more complicated 
in practice. For instance, our main tool in proving Theorem \ref{FK(q)} and Theorem \ref{HK(q)} is using the results on cyclotomic classes of \cite{BP}, 
which guarantee existence for all admissible prime power orders greater that a certain bound; in the general case,  the bounds one obtains becomes  almost immediately unmanageable.
In studying HKs we have seen that the bound is  $Q(8)$ which is still approachable; 
moving on to  $(13,3,1)K(v)$, we have  a bound greater than $10^{13}$.

Let us close this work with an open question concerning Fano Kaleidoscopes over a finite field. 

When we consider a FK of order $v=2^n-1$, we may identify our point-set $\cal V$ with the space $\F_2^n\setminus \{0\}$, and look at
the following question.\begin{quote}

Is it possible to find a FK on these points having the extra property that each block is the set of non-zero vectors of a 3-dimensional vector space over $\F_2$?

\end{quote} 
Note that the answer is trivially affirmative in the case $n=3$; it is enough to use the replication trick of Remark \ref{repeat}. 

This question can naturally be discussed in the setting of $q$-analogs of designs.

Recall that a 2-$(n, k, \lambda)$ design over the field $\F_q$, or 2-$(n, k, \lambda; q)$ design is a collection $\cal B$ of $k$-dimensional
subspaces of $\F_q^n$
 with the property that any $2$-dimensional subspace of $\F_q^n$ is
contained in exactly $\lambda$ members of $\cal B$; we also say that such a design is
the $q$-analog of a 2-$(n, k, \lambda)$ design.
Research on $q$-analogs of designs has attracted quite a lot of interest recently; in particular, 
the only non-trivial known 2-$(n, k, 1; q)$ designs have recently been constructed in \cite{BEOVW} for $q=2, n=13$ and $k=3$.

In the Fano Kaleidoscope setting, we are considering the case $q=2$, $k=3$ and $\lambda=7$; note that a 2-$(n, 3, 7; 2)$ design
can be viewed as a 2-$(2^n-1,2^3-1,7)$ design in the classical sense, where the points are the elements of $\F_2^n\setminus\{0\}$,
with the additional property that $B \cup \{0\}$ is a
subspace of $\F_2^n$ for every block $B$.  We will denote by  FK$(2^n-1;2)$  a Fano Kaleidoscope FK$(2^n-1)$ having the extra property that each block is the set of non-zero vectors of a 3-dimensional vector space over $\F_2$.

When looking for Fano Kaleidoscopes, the number of points $v=2^n-1$ is congruent to 1 modulo 6, so $n$ must be odd.
We have the following  existence results.
\begin{theorem}
A $2$-$(n, 3, 7; 2)$ design exists for all odd values of $n$. 
\end{theorem}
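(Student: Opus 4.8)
\emph{Proof proposal.} The plan is to reformulate the statement projectively and then build the design from a single transitive symmetry. A $2$-$(n,3,7;2)$ design is the same thing as a collection of planes (3-dimensional subspaces, equivalently Fano subplanes) of $PG(n-1,2)$ such that every line (2-dimensional subspace) lies on exactly $7$ of them, the $2^n-1$ points being the nonzero vectors of $\F_2^n$. Before constructing anything I would record why the parity hypothesis is exactly right: the number of blocks through a fixed point is $r=7(2^{n-1}-1)/3$, which is an integer if and only if $3\mid 2^{n-1}-1$, i.e. if and only if $n$ is odd. So $n$ odd is forced, and the theorem asserts that this obvious condition is sufficient.

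First I would dispose of the small cases. For $n=3$ the whole space is the only plane, and replicating it seven times, exactly as in Remark \ref{repeat}, covers each of the $7$ lines $7$ times. For $n=5$ I would simply take \emph{all} planes: a line of $PG(4,2)$ lies on as many planes as there are points in the quotient $PG(2,2)$, namely $2^{3}-1=7$, so the complete design already has $\lambda=7$.

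For general odd $n$ the main idea is to make the design invariant under the Singer cycle $C=\F_{2^n}^{*}$ acting on the points by multiplication. Since multiplication by a fixed nonzero scalar is $\F_2$-linear, $C$ permutes lines and planes; and because $n$ is odd we have $2^n\equiv -1\pmod 3$, so $3\nmid |C|$. A nontrivial scalar can neither fix a nonzero vector nor have order $3$, so it cannot stabilize a line; hence all line-orbits have full length $N=2^n-1$, and there are exactly $L=(2^{n-1}-1)/3$ of them (the same holds for plane-orbits except for the subfield planes when $3\mid n$). A $C$-invariant design is the union of the $C$-orbits of finitely many base planes $P_1,\dots,P_t$, and a short orbit count shows that $P_i$ covers each line of a given orbit $O_j$ with multiplicity equal to the number $m_{ij}$ of the seven lines of $P_i$ lying in $O_j$. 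Thus the design condition collapses to the combinatorial requirement $\sum_i m_{ij}=7$ for every line-orbit $j$ (automatically $\sum_j m_{ij}=7$, since each plane has seven lines, which forces $t=L$). I would then produce such a family by a cyclotomic, evenly-distributed-differences argument in the spirit of Lemmas \ref{wilson} and \ref{BP}, choosing base planes whose seven lines realize prescribed cyclotomic patterns so that every line-orbit is met exactly seven times.

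The step I expect to be the real obstacle is precisely this last balancing: guaranteeing, uniformly in $n$, that base planes exist whose line-orbit incidence vectors meet every column constraint \emph{exactly} rather than merely on average. Equivalently, one must realize a nonnegative integer $L\times L$ matrix with all row and column sums equal to $7$ using only rows that genuinely arise as the line-pattern of a subspace, and do so for all odd $n$, including the case $3\mid n$ (e.g. $n=9$) where the subfield planes give short orbits that must be handled separately. An alternative route avoiding the Singer cycle is a direct-sum recursion $n\to n+2$ with base cases $n\in\{3,5\}$: writing $\F_2^{\,n+2}=\F_2^{\,n}\oplus\F_2^{2}$ and splitting the lines of the larger space into those inside $\F_2^{\,n}$, those meeting it in a single point, and those transversal to it, one builds the new planes type by type. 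There the same difficulty resurfaces as the need to balance the three line-types at once without over-covering the lines lying inside $\F_2^{\,n}$.
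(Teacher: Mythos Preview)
The paper does not actually prove this theorem. Immediately after stating it, the authors write that the case $\gcd(n,6)=1$ is due to Thomas \cite{T} and that the extension to all odd $n$ is in a preprint of Buratti and Naki\'c \cite{Bq}; no argument is given in the present paper. So there is no ``paper's own proof'' to compare your proposal against.

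As for your proposal itself: the preliminary material is sound. The divisibility computation $r=7(2^{n-1}-1)/3$ correctly identifies $n$ odd as necessary; the base case $n=3$ is fine; the observation that for $n=5$ the set of \emph{all} planes already has $\lambda=7$ is correct and elegant. Your Singer-cycle reduction is also accurate: for $n$ odd one has $3\nmid 2^n-1$, so every line-orbit under $\F_{2^n}^*$ is full, there are $L=(2^{n-1}-1)/3$ of them, and a $C$-invariant design is equivalent to choosing $L$ base planes whose $7\times L$ line-orbit incidence vectors have every column sum equal to $7$.

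However, you explicitly flag the crux --- actually producing such a system of base planes uniformly in $n$ --- as ``the real obstacle'', and you do not carry it out. This is not a proof but a correct reformulation together with a plan. The cyclotomic/Weil-type argument you gesture toward (in the spirit of Lemmas \ref{wilson} and \ref{BP}) is indeed the kind of tool used in \cite{Bq}, but invoking its ``spirit'' is not the same as executing it: one must specify which cyclotomic constraints pin down the base planes, prove those constraints are simultaneously satisfiable for all odd $n$ (not merely for $n$ large), and separately handle the short plane-orbits when $3\mid n$. Your alternative $n\to n+2$ recursion has the same status: you correctly identify where it would get stuck (balancing the three line-types) and stop there. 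In short, the scaffolding is right, but the load-bearing step is missing, and since the paper outsources the proof entirely, you would need to consult \cite{T} and \cite{Bq} to see how it is actually done.
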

This is a  result is due to Thomas (1987, \cite{T}) under the hypothesis $\gcd(n,6)=1$; recently the first author and A. Nakic \cite{Bq} proved that the result holds for all odd $n$.
Based on ideas from \cite{Bq}, we were able to construct examples of FK$(2^n-1;2)$ for $n=5$ and $7$; it would be interesting to have a general construction.

\end{document}